\newcommand{\RNum}[1]{\mathbf{\uppercase\expandafter{\romannumeral #1\relax}}}
\newcommand{\prox}{{\mathbf{prox}}}
\DeclareMathOperator*{\argmin}{arg\,min}
\newcommand{\conv}{\mathop{\bf conv}}
\newcommand{\RR}{\mathds{R}}
\newcommand{\cE}{{\mathcal{E}}}
\newcommand{\cO}{{\mathcal{O}}}
\newcommand{\vd}{{\rm d}}
\newcommand{\vx}{\boldsymbol{x}}
\newcommand{\vX}{\boldsymbol{X}}
\newcommand{\vy}{\boldsymbol{y}}
\newcommand{\vY}{\boldsymbol{Y}}
\newcommand{\vz}{\boldsymbol{z}}
\newcommand{\vA}{\boldsymbol{A}}
\newcommand{\vQ}{\boldsymbol{Q}}
\newcommand{\vb}{\boldsymbol{b}}
\newcommand{\vu}{\boldsymbol{u}}
\newcommand{\vr}{\boldsymbol{r}}
\colorlet{blue}{blue!90!black}
\colorlet{red}{red!50!black}
\colorlet{green}{green!50!black}
\newtheorem{assumption}{Assumption}
\newtheorem{lemma}{Lemma}
\newtheorem{theorem}{Theorem}
\newtheorem{proposition}{Proposition}
\newtheorem{corollary}{Corollary}
\newtheorem{remark}{Remark}
\title
{\bf \Large Accelerated Gradient Methods with Gradient Restart: Global Linear Convergence\footnote{The research was supported partly by the National Key R\&D Program of China (No. 2021YFA001300), the National Natural Science Foundation of China (Nos. 12271291, 12271150),
the Hunan Provincial Natural Science Foundation of China (No. 2023JJ10001),
and The Science and Technology Innovation Program of Hunan Province (No. 2022RC1190). }
}
\date{\today}
\author{
Chenglong Bao\thanks{Yau Mathematical Sciences Center, Tsinghua University, Beijing, China, and Yanqi Lake Beijing Institute of Mathematical Sciences and Applications, Beijing, China (\url{clbao@tsinghua.edu.cn}).}
\qquad
Liang Chen\thanks{School of Mathematics, Hunan University, Changsha, China, and Hunan Provincial Key Laboratory of Intelligent Information Processing and Applied Mathematics, Changsha, China (\url{chl@hnu.edu.cn}).}
\qquad
Jiahong Li\thanks{Department of Applied Mathematics, 
The Hong Kong Polytechnic University, Hong Kong, China (\url{jiahong24.li@polyu.edu.hk}).}
\qquad
Zuowei Shen\thanks{Department of Mathematics, National University of Singapore, Singapore (\url{matzuows@nus.edu.sg}).}
}
\begin{document}
\maketitle
 \begin{abstract}
Gradient restarting has been shown to improve the numerical performance of accelerated gradient methods. This paper provides a mathematical analysis to understand these advantages. First, we establish global linear convergence guarantees for both the original and gradient restarted accelerated proximal gradient
method when solving strongly convex composite optimization problems. Second, through analysis of the corresponding ordinary differential equation
model, we prove the continuous trajectory of the gradient restarted Nesterov's accelerated gradient method
exhibits global linear convergence for quadratic convex objectives, while the non-restarted version provably lacks this property by [Su, Boyd, and Cand\'es, \textit{J. Mach. Learn. Res.}, 2016, 17(153), 1-43].
\end{abstract}
\medskip
{\small
\begin{center}
\parbox{0.95\hsize}{{\bf Keywords.}\;
convex optimization,
accelerated gradient method,
restarting,
first-order methods,
global R-linear convergence rate}
\end{center}
\begin{center}
\parbox{0.95\hsize}{{\bf MSC Classification.}\; 90C25, 65K05, 65B05, 90C06, 90C30}
\end{center}}

\section{Introduction}

Consider the unconstrained convex composite optimization problem
\begin{equation}\label{obj: nonsmooth}
    \min_{\vx\in\RR^n}~F(\vx) = f(\vx) + g(\vx),
\end{equation}
where $f:\RR^n\to\RR$ is an $L$-smooth convex function and $g:\RR^n\to(-\infty,\infty]$ is a closed, proper, and convex function. The term $L$-smoothness refers to the property that $f$ is continuously differentiable and its gradient $\nabla f$ is Lipschitz continuous with $L>0$  being a Lipschitz constant.
Given an initial point $\vx_0\in\RR^n$ and set $\vy_0=\vx_0$, the celebrated accelerated proximal gradient (APG) method/the fast iterative shrinkage/thresholding algorithm ({FISTA})~\cite{beck2009fast} generates two sequences $\{\vx_k\}$ and $\{\vy_k\}$ via
\begin{equation}
\label{iter:APG}
\begin{cases}
\vx_{k+1} =\prox_{sg}(\vy_{k}-s\nabla f(\vy_{k})),  \\
\vy_{k+1} =\vx_{k+1}+\beta_{k+1}(\vx_{k+1}-\vx_{k}), 
\end{cases}
\quad k=0,1,\ldots,
\end{equation}
where $s$ is the step size, $\beta_{k+1}\in[0,1)$ is the extrapolation coefficient at the $k$-th step, and the proximal mapping $\prox_{sg}:\RR^n\mapsto\RR^n$ is defined by
\begin{equation}\label{def: prox}
\begin{array}{l}
     \prox_{sg}(\vy) := \argmin_{\vz}\big\{\frac{1}{2}\|\vy-\vz\|_2^2 + sg(\vz)|\vz\in\RR^n\big\}. 
\end{array}
\end{equation}
Generally, the extrapolation sequence $\{\beta_k\}$ in \eqref{iter:APG}  should satisfy the conditions that
\begin{equation}
\label{NestRule}
\begin{array}{l}
\beta_{k+1}=
\frac{t_{k+1}-1}{t_{k+2}}
\quad\mbox{with}\quad
t_1=1,\qquad
t_{k}\nearrow+\infty,
    \qquad
    \mbox{and}
    \qquad
    t_{k+1}^2-t_{k+1}\le t_{k}^2 
   \quad\forall k\ge 1.
\end{array}
\end{equation}
Here, $t_{k}\nearrow+\infty$ means that $t_{k+1}>t_k$ and $t_k\to\infty$ when $k\to\infty$.
In this paper, unless specifically stated otherwise, we always assume that the sequence $\{\beta_k\}$ satisfies~\eqref{NestRule}.
More specifically, there are two typical choices for this sequence.
One is from Nesteorv's original work \cite{Nesterov1983AMF} by setting
\begin{equation}
\label{step:NAG beta}
\begin{array}{l}
\beta_{k+1} = \frac{\theta _{k+1}\left( 1-\theta _k \right)}{\theta _k}
\quad\mbox{with}\quad \theta _0=1 \quad\mbox{and}\quad
\theta _{k+1}=\frac{\sqrt{\theta _{k}^{4}+4\theta _{k}^{2}}-\theta _{k}^{2}}{2}.
\end{array}
\end{equation}
The other one \cite{tseng2008accelerated, Lan2011PrimaldualFM, chambolle2015convergence} is to set
\begin{equation}
\label{step:NAG r}
\begin{array}{l}
\beta_{k+1} = \frac{k}{k+r+1}
\quad\mbox{ with the constant } r\ge 2.
\end{array}
\end{equation}
It is easy to verify that both \eqref{step:NAG beta} and \eqref{step:NAG r} are specific cases of \eqref{NestRule} by setting
$t_{k+1}=\frac{1+\sqrt{1+4t_k^2}}{2}$
and $ t_{k+1}=\frac{k+r}{r}$, respectively. 
As demonstrated in \cite{tseng2008accelerated,beck2009fast,chambolle2015convergence}, for any fixed step size $s\in(0,1/L]$, the APG method \eqref{iter:APG} with parameters adhering to \eqref{step:NAG beta} or \eqref{step:NAG r} 
has an $\mathcal{O}(1/k^2)$ convergence rate (in terms of the objective function value). 
This significantly improves the convergence of the classical gradient descent (GD) method. Furthermore, as shown in \cite{Nesterov2004IntroductoryLO}, the APG method achieves optimal complexity among all methods with only access to the first-order information of the objective function, making it a highly successful approach in optimization theory and applications.

In practical applications, the objective sequence generated by \eqref{iter:APG} with extrapolation coefficient \eqref{NestRule} may exhibit oscillation due to the increasing momentum term by $\beta_k$, potentially slowing down the convergence behavior. To further improve numerical performance, the gradient restarted APG, an adaptive restart scheme, was proposed in \cite{ODonoghue2015AdaptiveRF}. Specifically, if at the $k$-th step the following condition holds:
\begin{equation}
\langle \vx_{k+1}-\vx_k, \vy_k-\vx_{k+1}\rangle> 0,
\label{restart:gradient}
\end{equation}
one restarts the iteration~\eqref{iter:APG}, using $\vx_k$ as the initial point. It is worth noting that checking the condition~\eqref{restart:gradient} does not introduce an additional computational burden except for calculating an inner product. 
As a result, the efficient computation of \eqref{restart:gradient} and its fast numerical convergence make the gradient restarted APG method and its variants attractive choices for various applications, including image processing~\cite{bao2018coherence}, quantum tomography~\cite{shang2017superfast}, and machine learning~\cite{ito2017unified}. 
However, even $f$ is assumed to be strongly convex, the convergence rate of the gradient restarted APG method remains unknown. 
\color{black}

For the case that $g(\vx)\equiv 0$, problem \eqref{obj: nonsmooth} reduces to the smooth optimization 
$$
\min_{\vx\in\RR^n}f(\vx), 
$$
for which the iteration \eqref{iter:APG} turns to
\begin{equation}\label{NAG}
\begin{cases}
\vx_{k+1}=\vy_{k}-s\nabla f(\vy_{k}), 
\\
\vy_{k+1}=\vx_{k+1}+\beta_{k+1}(\vx_{k+1}-\vx_{k}),
\end{cases}
\quad k=0,1,\ldots,
\end{equation}
which is known as Nesterov's accelerated gradient method \cite{Nesterov1983AMF}. 
Assuming that $\{\beta_k\}$ satisfies \eqref{step:NAG beta} or \eqref{step:NAG r} with $r=2$, and setting $t=k\sqrt{s}$ and $s\to0$, one can derive the continuous counterpart of the iteration~\eqref{NAG} as the following second-order ODE~\cite{su2016differential}:
\begin{equation}
\label{low-c}
\begin{cases}
\ddot{\vX}(t)+\frac{3}{t}\dot{\vX}(t)+\nabla{f(\vX(t))}=0,  \ t>0,
\\[2mm]
\vX(0)=\vx_0,\ \dot{\vX}(0)=0.
\end{cases}
\end{equation}
The continuous model \eqref{low-c} helps to understand the APG method from different points of view, such as the fluctuation phenomena \cite[Section 3]{su2016differential} and the magic constant $3$ \cite[Section 4]{su2016differential}.
However, when the objective function is quadratic and strongly convex, \cite[Section 3.2]{su2016differential} establishes a lower bound for the residual of the objective function by
\begin{equation}
\label{eq:suupbound}
\begin{array}{ll}
    \limsup\limits_{t\to \infty} t^3\left(f\left(\vX(t)\right)-f^\star \right) \geq \frac{2\|\vx_0-\vx^\star\|^2}{\pi \sqrt{L}},
\end{array}
\end{equation}
where $f^\star$ is the optimal value and $x^\star$ is the optimal solution.
%where $\vx^\star$ is the minimizer of $f$ and  $f^\star=f(\vx^\star)$.
According to \eqref{eq:suupbound}, ${\cO}(1/k^3)$ is the best possible worst-case convergence rate of the APG method that can be obtained from the ODE model \eqref{low-c}, even when $f$ is strongly convex and $g$ is vacuous.
This prevents the analysis of the R-linear convergence rates of \eqref{low-c} for strongly convex problems. As explained in \cite[Section 5]{su2016differential}, such a drawback results from the increasingly too much momentum as $t\to\infty$, and periodic oscillations in the solution of the asymptotic ODE $\ddot{\vX}(t)+\nabla f(\vX(t))=0$ are considered the main reason impeding the convergence rate. With $g$ being vacuous, the restart condition \eqref{restart:gradient} is equivalent to $\langle \nabla f(\vy_k), \vx_{k+1}-\vx_k\rangle >0$, whose continuous counterpart is given by
\begin{equation}
\label{eq:gradrest}
    \big\langle \nabla f(\vX(t)),\dot{\vX}(t) \big \rangle \ge  0.
\end{equation}
Restarting the flow in equation \eqref{low-c} when the condition \eqref{eq:gradrest} is satisfied seems to break through the limitation of the bound \eqref{eq:suupbound}. Still, there is no theoretical validation for this claim. 

In summary, the APG method is troubled by the oscillation phenomenon, but the gradient restart scheme has shown fast numerical improvement and easy implementation in various applications. This strongly motivates us to provide a mathematical understanding of gradient restarting. Specifically, the question is whether the APG method with gradient restart schemes achieves enhanced linear convergence rates for solving strongly convex optimization problems, in discrete or continuous time perspectives.

\subsection*{Related work}
Extrapolation is a classical technique to accelerate the GD method, dating back to the heavy-ball method \cite{polyak1964some} for minimizing smooth and strongly convex functions $f$ with the updating formula 
\begin{equation}\label{iter:HB}
\begin{array}{l}
     \vx_{k+1} = \vx_k - s\nabla f(\vx_k) + \beta (\vx_k-\vx_{k-1}), 
\end{array}
\end{equation}
where $s$ and $\beta$ are positive constants. By making slight modifications to \eqref{iter:HB}, Nesterov proposed the well-known accelerating method \cite{Nesterov1983AMF} that consists of the iteration \eqref{NAG}, which is the original formula for APG with $g=0$. If $\beta_k$ follows the \eqref{step:NAG beta} or \eqref{step:NAG r}, the APG method achieves the optimal iteration complexity up to a constant among the first-order methods for convex problems~\cite{Nesterov2004IntroductoryLO}. The optimized gradient method (OGM) \cite{kim2016optimized} is a first-order algorithm similar to APG with $g=0$. It is specially designed for solving the $L$-smooth convex problem via the Performance Estimation Problem approach \cite{drori2014performance} and achieves the lower bound of complexity \cite{drori2017exact, Kim2017OntheConvergence}, which is half of APG. \cite{jang2023computer} further extended the OGM to the composite scenario ($g\in\Gamma_0$), named Optimal Iterative Shrinkage Thresholding Algorithm (OptISTA). Although the OptISTA algorithm was theoretically superior to the APG method, numerical experiments did not validate its efficiency.

For the case that $f$ is $\mu$-strongly convex, i.e., $f(\vx)-\mu\|x\|^2/2$ is convex with  $\mu>0$ being available, a preferred choice of the extrapolation parameters is setting
\begin{equation}\label{step:NAG-sc}
\begin{array}{ll}
    \beta_k = \beta^*:= \frac{\sqrt{L}-\sqrt{\mu}}{\sqrt{L}+\sqrt{\mu}}\quad\mbox{and}\quad s=1/L.
\end{array}
\end{equation}
When $g\equiv 0$, the resulting algorithm \eqref{NAG} possesses a linear convergence property that
$$
\begin{array}{ll}
f(\vx_{k+1}) - f^\star \leq \Big(1-\sqrt{\frac{\mu}{L}}\Big)^{k+1}
\left(f(\vx_0)-f^\star+\frac{\mu}{2}\|\vx_0-\vx^\star\|^2\right),
\end{array}
$$
where $\vx^\star$ is the unique minimizer of $f$ and $f^\star=f(\vx^\star)$. Similarly, using \eqref{step:NAG-sc}, Nesterov \cite{Nesterov2013GradientMF} established analogous linear convergence for \eqref{iter:APG}. 
Besides, various accelerated schemes for the strongly convex case have been proposed~\cite{siegel2021accelerated, aujol2022convergence,Aujol2022ConvergenceRO,park2021factorsqrt2,Taylor2021AnOG,ochs2014ipiano,Chambolle2016AnIT}. However, a common requirement across these methods is the estimation of the parameter $\mu$, a task that can be challenging or time-consuming in practice~\cite{Nesterov2013GradientMF, Fercoq2019AdaptiveRO, Lin2014AnAA}. Moreover, empirical observations suggest that inaccurate estimation of $\mu$ can significantly slow down the convergence speed \cite{ODonoghue2015AdaptiveRF}. Therefore, a straightforward approach is to use the off-line formulas \eqref{step:NAG beta} or \eqref{step:NAG r}, which are independent of $\mu$ and $L$. Some recent work~\cite{tao2016local,liang2017activity,li2024linear}demonstrated the local R-linear convergence of the APG method~\eqref{iter:APG} for minimizing strongly convex problems. We will further show the global R-linear convergence of the APG method, which helps to provide the convergence analysis for restarted methods.

It is widely observed that the extrapolation in the APG method introduces the oscillations, which lead to a slower convergence rate near the optimal point compared to the gradient descent (GD) or proximal gradient (PG) methods~\cite{tao2016local, liang2017activity}. This phenomenon motivates the development of methods aiming to prevent the iteration trajectory from oscillating. Recent works have proposed three main types of measures: lazy starting, monotonizing, and restarting. The lazy starting scheme is proposed in \cite{liang2022improving}, which delays oscillation occurrence by increasing $r$ in \eqref{step:NAG r}. Aujol et al. \cite{Aujol2023FISTAIA} further refined this approach by quantitatively choosing a proper $r$ based on the desired accuracy. Specifically, given a termination error $\varepsilon$, they computed a $r_{\varepsilon}$ related to $\varepsilon$ and optimal value $F^\star$ such that the APG method using formula \eqref{step:NAG r} with $r=r_{\varepsilon}$ reaches the $\varepsilon$-solution faster than PG. However, these methods can not obtain the convergence rate due to the relation between $r$ and $\varepsilon$. Additionally, determining $F^\star$ poses challenges for their practical implementation. Another measure is a monotone version introduced by Beck et al. \cite{beck2009fastGradient} and further studied by \cite{Wang2022ConvergenceRO}. The main idea involves comparing each step with the GD/PG method and selecting the better one, thereby ensuring monotonicity. While it is a simple scheme, it breaks the algebraic structure of the $\beta$ sequence, resulting in little practical improvement.

Restarting is a widespread technique employed in many accelerated methods,
such the conjugate gradient method \cite{powell1977restart}, generalized minimum residual (GMRES) method \cite{Saad1986GMRESAG}, Halpern iteration \cite{zhang2022efficient}, and Anderson acceleration method \cite{Zhang2018GloballyCT}.
For the APG algorithm \eqref{iter:APG},
this approach resets $\beta_k=0$ at the current step when the restart conditions are met. 
The convergence-based restarts including the fixed restart \cite{Nemirovski2007EFFICIENTMI, Nesterov2013GradientMF} and their variants \cite{Fercoq2019AdaptiveRO, alamo2019restart, alamo2019gradient, alamo2022restart, aujol2022fista, aujol2024parameter, roulet2020sharpness, Renegar2022ASN}, primarily leverage the sublinear rate \(F(\vx_k) - F^\star \leq \mathcal{O}(1/k^2)\) for APG and the strong convexity to establish a guarantee of linear convergence. Firstly, a straightforward strategy involves establishing a fixed schedule, which restarts iteration~\eqref{iter:APG} or~\eqref{NAG} after a preset number of iterations \cite{becker2011templates, Nesterov2013GradientMF, ODonoghue2015AdaptiveRF} or an exponential number of iterations \cite{Renegar2022ASN,roulet2020sharpness}. Although these methods can achieve linear convergence or optimal complexity, their configurations still require knowledge of the strong convexity parameter $\mu$ or the optimal function value. Additionally, a simple logarithmic grid search for these parameters is sufficient to guarantee near-optimal performance, as detailed in \cite{roulet2020sharpness}. However, this method still requires an estimate of $N$,  which is related to the condition number. Consequently, without precise information about the optimal value or the condition number of the objective function, obtaining an optimal restart schedule becomes difficult, making adaptive restart schemes based on current state information more attractive.

Typically, there are three kinds of adaptive restart schemes: the function scheme~\cite{ODonoghue2015AdaptiveRF, beck2009fastGradient}, the speed scheme~\cite{su2016differential}, and the gradient scheme \cite{ODonoghue2015AdaptiveRF}. The function scheme shares the same idea as the monotone version that compares the function value between two consecutive steps. If the objective function value increases, it restarts the iteration, and the current step is discarded. This direct method exhibits satisfactory performance, but it introduces additional computation of the function value, and the theoretical analysis always requires a sufficient decrease property. The speed restart condition~\cite{su2016differential} is given by $\|\vx_k-\vx_{k-1}\|<\|\vx_{k-1}-\vx_{k-2}\|$, 
and the corresponding continuous version, when $g(\vx)\equiv 0$, is given by
\begin{equation} \label{restart:speed}
\frac{\mathrm{d}\| \dot{\vX}(t)\|^2}{\mathrm{d}t}
\le 0,
\end{equation}
which, as proved in \cite{su2016differential},
guarantees the global linear convergence if $f$ is strongly convex.
It is noted that the ODE \eqref{low-c} implies
\begin{equation}
\label{eq:grsr}
  \frac{\vd f(\vX(t))}{\vd t}=\big\langle \nabla f(\vX(t)),\dot{\vX}(t) \big\rangle
 = -\frac{3}{t}\|\dot{\vX}(t)\|^2
 -\frac{1}{2}\frac{\mathrm{d}\|\dot{\vX}(t)\|^2}{\mathrm{d}t}.
\end{equation}
Therefore, compared to \eqref{eq:gradrest}, the speed scheme~\eqref{restart:speed} is more conservative and may somewhat scarify the acceleration property.
On the other hand, the gradient scheme~\eqref{eq:gradrest} is the weakest restart condition since it is equivalent to the monotonicity of the objective function due to the first equality in~\eqref{eq:grsr}. Numerical experiments in~\cite{su2016differential} validate the linear convergence of the gradient restart scheme, leaving an open problem
\cite[pp. 27]{su2016differential}: "Whether there exists a provable linear convergence rate for the gradient restart scheme when solving strongly convex problems?". 
If $f(\vx)=\frac{1}{2}\|A \vx-b\|_2^2$, our previous work~\cite{bao2018coherence} relaxed the gradient scheme \eqref{restart:gradient} and proved the local linear convergence under the metrical subregularity condition of the objective function on the solution set.
The recent preprint \cite{moursi2023accelerated} shows that, for one-dimensional functions within $\mathcal{F}_{L}$ and under certain conditions, the APG with gradient restarting can improve upon the $\mathcal{O}(1/k^2)$ bound. However, this applicability to higher-dimensional cases has not yet been determined. 
%In summary, the gradient restart scheme has shown fast numerical convergence and easy implementation in various literature, which strongly motivates us to provide a mathematical understanding of it.

In this paper, we establish the global R-linear convergence of the gradient restarted APG method and provide provable benefits over the non-restarted method. To achieve the linear convergence for gradient restarting, we establish the global R-linear convergence rate of the original APG method as a preliminary step. We provide a sufficient condition such that any restart scheme satisfying this condition achieves a linear convergence that outperforms that of the non-restarted scheme. 

% On the other hand, while the corresponding ODE of the APG method with $g=0$ cannot converge at a linear rate even when solving a strongly convex quadratic function, we prove the linear convergence of the gradient restarted ODE in the same scenario. 

% This result further verifies the benefit of the gradient restart strategy, and partly answers the open problem \cite[pp. 27]{su2016differential}.

In addition, 
%following the formulation in the seminal paper by Su, Boyd, and Cand\`es \cite{su2016differential}, 
we employ the ordinary differential equation (ODE) arguments of \cite{su2016differential} to establish the provable advantages of the gradient scheme \eqref{restart:gradient} for the APG method. 
Assuming $f$ is quadratic and convex, and restarting \eqref{low-c} at time $t$ when \eqref{eq:gradrest} holds,
we are able to show that the resulting trajectory $\vX^{\rm gr}(t)$ (c.f. \cref{sec:gradode}) has the global R-linear convergence property that
\begin{equation}
\label{convgence-linear}
\begin{array}{ll}
f(\vX^{\rm gr}(t)) - f^\star\leq  \frac{c_1 L\|\vx_0-\vx^\star\|^2}{2}e^{-c_2t},
\end{array}
\end{equation}
where $c_1>0 $ and $c_2\in(0,1)$ are two constants.
Compared to \eqref{eq:suupbound}, the result in \eqref{convgence-linear} validates the theoretical advantages of the gradient scheme \eqref{eq:gradrest} for the APG method. Moreover, the result~\eqref{convgence-linear} partially addresses the open question of whether there exists a provable linear convergence rate for the gradient restart scheme \eqref{eq:gradrest} when applied to strongly convex problems \cite[pp. 27]{su2016differential}. Building upon these results, this work provides a theoretical understanding of the inherent properties of the gradient restarting within the accelerated gradient methods, which are consistently observed in numerical experiments.

\subsection*{Organization}
The remaining part of this article is organized as follows.
In  \Cref{sec:apgc}, we focus on the APG method with gradient restarting and establish its global R-linear convergence rate for problems with strongly convex objective functions. 
In \Cref{sec:gradode}, we prove the global R-linear convergence of the solution trajectory corresponding to the ODE model~\eqref{low-c} with gradient restart scheme \eqref{eq:gradrest}, under the assumption that $f$ is quadratic and convex. 
Finally, we conclude this paper in \Cref{sec:con} with some discussions.

\subsection*{Notation}  
We use bold lowercase letters (e.g., $\vx$) to represent vectors and bold uppercase letters (e.g., $\vA$) to represent matrices.
Given a real number $u\in\RR$, we denote $\lfloor u\rfloor$ as the floor number (greatest integer less than or equal to) of $u$, and define the fractional part of $u$ as ${\bf frac}(u) = u - \lfloor u\rfloor$. For any positive integers $m$ and $i$, we define $[m]$ as the set $\{1, 2, \ldots, m\}$ and $[m] \pm i$ to be the set $\{j \pm i \mid j \in [m]\}$.

Let $\|\cdot\|$ denote the Euclidean norm on $\RR^n$ and $L>0$, we define $\mathcal{F}_{L} $ to be the class of $L$-smooth convex functions on $\RR^n$, that is
\begin{equation*}
\mathcal{F}_{L} : = \big\{f:\RR^n\to \RR\mid f\text{ is convex and differentiable, and } \|\nabla f(\vx)-\nabla f(\vy)\|\leq L\|\vx-\vy\|,\forall \vx,\vy\in\RR^n\big\}.
\end{equation*}
In addition, $\mathcal{S}_{\mu}$ denotes the class of $\mu$-strongly convex functions on $\RR^n$, that is
\begin{equation*}
\mathcal{S}_\mu : = \big\{f:\RR^n\to(-\infty,+\infty]\mid  f(\vx)-\frac{\mu}{2}\|\vx\|^2\text{ is convex}\big\}.
\end{equation*}
We define $\mathcal{S}_{\mu, L}: = \mathcal{F}_{L}\cap\mathcal{S}_\mu$ and denote $\Gamma_0$ as the class of closed, proper, convex, and extended real-valued functions on $\RR^n$.

\section{Global R-linear convergence of APG with gradient restarting}\label{sec: discrete}
This section is devoted to proving the global R-linear convergence for the APG method with gradient restarting in terms of both the objective value and the iteration sequence. We first establish the global R-linear convergence of the original APG method in \Cref{subsection:apgrate}, based on which our main results are developed in \Cref{sec:apgc}. 
For convenience, we make the following blanket assumption throughout this section. 
\begin{assumption}
\label{ass:blanket}
Assume for probem \eqref{obj: nonsmooth} that $f\in\mathcal S_{\mu, L}$, $g\in\Gamma_0$ with $L>\mu>0$. Moreover, let $\vx^\star$ denote the unique minimizer of $F$ defined in \eqref{obj: nonsmooth} and $F^{\star}=F(\vx^\star)$.
\end{assumption}

\subsection{Global R-linear convergence of APG}
\label{subsection:apgrate}

In this subsection, we establish the global R-linear convergence of the APG  method \eqref{iter:APG} under \Cref{ass:blanket}, considering both the objective value and the iteration sequence. 

Firstly, given $s>0$ and $g\in\Gamma_0$, we define the auxiliary function $G_s:\RR^n\to\RR^n$ as
\begin{equation}
\label{gsy}
G_s(\vy):=\frac{\vy-\prox_{s g}(\vy-s\nabla f(\vy))}{s}. 
\end{equation}
Then, the following basic descent property holds for the function $F$ satisfying \Cref{ass:blanket}. 
\begin{lemma}
%\label{lemma: bas2}
Under Assumption \ref{ass:blanket}, it holds for any $\vx,\vy\in\RR^n$ that
	\begin{equation}\label{bas2}
    \begin{array}{c}
         F\left(\vy-s G_{s}(\vy)\right) \leq F(\vx)+G_{s}(\vy)^{T}(\vy-\vx)-\left(s-\frac{Ls^2}{2}\right)\left\|G_{s}(\vy)\right\|^{2}-\frac{\mu}{2}\|\vy-\vx\|^{2}.
    \end{array}	
	\end{equation}
\end{lemma}

\begin{proof}
The first-order optimality of \eqref{def: prox} gives $G_s(\vy)-\nabla f(\vy)\in \partial g(\vy-sG_s(\vy))$, which implies
$$
\begin{array}{l}
		g(\vy-sG_s(\vy))-g(\vx)\leq \left( G_s(\vy)-\nabla f(\vy) \right)^{T}\left( \vy-sG_s(\vy)-\vx \right).
\end{array}
$$
Moreover, since $f\in\mathcal{S}_{\mu,L}$, it holds
\begin{equation}
    \begin{cases} 
f(\vy-sG_s(\vy))-f(\vy)\leq -s\nabla f(\vy)^{T}G_s(\vy)+\frac{Ls^2}{2}\left\| G_s(\vy) \right\|^2,\label{ineq: f dec}
\\
f(\vy)-f(\vx)\leq \nabla f(\vy)^{T}(\vy-\vx)-\frac{\mu}{2}\left\| \vy-\vx \right\|^2.
\end{cases}
\end{equation}
Combining the above three inequalities, one obtains
$$
\begin{array}{rl}
F(\vy-sG_s(\vy))-F(\vx)
=
&f(\vy-sG_s(\vy))-f(\vy)+f(\vy)-f(\vx)+g(\vy-sG_s(\vy))-g(\vx)
		\\[2mm]
		\leq &-s\nabla f(\vy)^{T}G_s(\vy)+\frac{Ls^2}{2}\left\| G_s(\vy) \right\|^2+\nabla f(\vy)^{T}(\vy-\vx)-\frac{\mu}{2}\left\| \vy-\vx \right\|^2\\[1mm]
		&+\left( G_s(\vy)-\nabla f(\vy) \right)^{T}\left( \vy-sG_s(\vy)-\vx \right),
	\end{array}
	$$
which implies \eqref{bas2}, and this completes the proof.    
\end{proof}

Based on \eqref{gsy}, one can equivalently rewrite the APG method \eqref{iter:APG} as the \Cref{algoapgc}.

\begin{algorithm}[ht]
\caption{Accelerated proximal gradient algorithm for  problem \eqref{obj: nonsmooth}}
\label{algoapgc}
\begin{algorithmic}[1]
\Require Initial point $\vx_0=\vy_0$, step size $s>0$, and parameter sequence $\{t_k\}$ satisfying \eqref{NestRule}.
\Ensure the minimizing sequence $\{\vx_k\}$ and $\{\vy_k\}$.
\For{$k=0,1,2,\ldots$ }
\State{
\begin{equation}
\label{APG}
\left\{\begin{array}{l}
\vx_{k+1}:=\vy_k-sG_s(\vy_k)\\[1mm]
\beta_{k+1}:=(t_{k+1}-1)/t_{k+2}\\[1mm]
\vy_{k+1}:=\vx_{k+1}+\beta_{k+1}(\vx_{k+1}-\vx_{k}).
\end{array}\right.
\end{equation}
}
\EndFor
\end{algorithmic}
\end{algorithm}
Next, to analyze the convergence rate of Algorithm \ref{algoapgc}, we define the Lyapunov sequence $\{  \cE_k\} $ by 
\begin{equation}
\label{lyap: APG}
\begin{array}{c}
     \cE_k:=\underbrace{s(t_{k+1}-1)t_{k+1}\left(F\left(\vx_{k}\right)-F^{\star}\right)}_{\cE_k^p}+\underbrace{\frac{1}{2}\left\|(t_{k+1}-1) (\vy_k-\vx_k)+\left(\vy_{k}-\vx^{\star}\right)\right\|^{2}}_{\cE_k^m}. 
\end{array}
\end{equation}
We have the following result on the Lyapunov sequence $\{\cE_k\} $.

\begin{lemma}
\label{lemma: diff apg}
Under Assumption \ref{ass:blanket}, let $\{\vx_k\}$ and $\{\vy_k\}$ be the sequences generated by Algorithm \ref{algoapgc} with step size $s\in(0,1/L)$ and $\{\cE_k\}$ be the Lyapunov sequence defined by \eqref{lyap: APG}. 
Then, it holds that
\begin{numcases}{~}
  \begin{array}{ll}
    \cE_{k+1}-\cE_k\leq
-\frac{s^2 t_{k+1}^2(1-sL)}{2}\left\|G_s(\vy_k)\right\|^2
 -\frac{\mu s (t_{k+1}-1)t_{k+1}}{2}\left\|\vy_k-\vx_k\right\|^2-\frac{\mu s t_{k+1}}{2}\left\|\vy_k-{\vx^\star}\right\|^2,
  \end{array}
 \label{ineq: diff of lyap apg}
\\[1mm]
    \begin{array}{ll}
        \cE_{k+1}\leq & \big[ (t_{k+2}-1)t_{k+2}\big(\frac{1}{2\mu s}-1+\frac{Ls}{2}\big)
        +\frac{(1+u+1/v)t_{k+1}^2}{2} \big] \big\| sG_s(\vy_{k}) \big\|^2
        \\[.5mm]
        &\quad +\frac{(1+v+w)(t_{k+1}-1)^2}{2}\big\|\vy_k-\vx_k\big\|^2 +\frac{1+1/w+1/u}{2}\big\|\vy_k-{\vx^\star}\big\|^2.
    \end{array}
    \label{upper bound: lyapk+1 apg}
\end{numcases}
for all $k\ge0$ and $u>0,v>0,w>0$.
\end{lemma}

\begin{proof}
Given $k\ge1$, a direct computation based on \eqref{lyap: APG} gives
\begin{equation}\label{diff part 1}
\begin{array}{l}
       \cE_{k+1}^p-\cE_k^p =s(t_{k+2}-1)t_{k+2}\left[F\left(\vx_{k+1}\right)-F^{\star}\right]-s(t_{k+1}-1)t_{k+1}\left[F\left(\vx_{k}\right)-F^{\star}\right]
       \\[1mm]
    =\underbrace{s(t_{k+1}^2-t_{k+1})\left[F\left(\vx_{k+1}\right)-F\left(\vx_{k}\right)\right]}_{\RNum{1}_1}
    +\underbrace{s(t_{k+2}^2-t_{k+2}-t_{k+1}^2+t_{k+1})\left[F\left(\vx_{k+1}\right)-F^{\star}\right]}_{\RNum{1}_2}.
\end{array}
\end{equation}
According to the facts that $t_{k+2} (\vy_{k+1}-\vx_{k+1})=(t_{k+1}-1) (\vx_{k+1}-\vx_k)$ and $\vy_{k}=\vx_{k+1}+sG_s(\vy_{k})$, one can get
\begin{equation}\label{proof:lemma2-eq1}
\begin{array}{ll}
     &  (t_{k+2}-1) (\vy_{k+1}-\vx_{k+1})+\left(\vy_{k+1}-\vx^{\star}\right)-(t_{k+1}-1) (\vy_k-\vx_k)-\left(\vy_{k}-\vx^{\star}\right)\\[.5mm]
    % = &(t_{k+2}-1) (\vy_{k+1}-\vx_{k+1})+\left(\vy_{k+1}-\vy_{k}\right)-(t_{k+1}-1) (\vy_k-\vx_k)\\
    = & (t_{k+2}-1) (\vy_{k+1}-\vx_{k+1})
    +\left(\vy_{k+1}-\vx_{k+1}-sG_s(\vy_{k})\right)
   \\[.5mm]
    &~ -(t_{k+1}-1) (\vx_{k+1}+sG_s(\vy_{k})-\vx_k)\\[.5mm]
    = &t_{k+2} (\vy_{k+1}-\vx_{k+1})-(t_{k+1}-1) (\vx_{k+1}-\vx_k)-t_{k+1}sG_s(\vy_{k})
    = -t_{k+1}sG_s(\vy_{k}). 
\end{array}
\end{equation}
Therefore, by using \eqref{proof:lemma2-eq1} and the fact $\frac{1}{2}\|\boldsymbol{\alpha}\|^2-\frac{1}{2}\|\boldsymbol{\beta}\|^2=\frac{1}{2}\|\boldsymbol{\alpha}-\boldsymbol{\beta}\|^2+\boldsymbol{\beta}^{T}(\boldsymbol{\alpha}-\boldsymbol{\beta})$ for any $\boldsymbol{\alpha} ,\boldsymbol{\beta} \in\RR^n$, one gets
\begin{equation}\label{diff part 2}
    \begin{array}{rl}
   \cE_{k+1}^m-\cE_k^m   
       =&\underbrace{- t_{k+1}(t_{k+1}-1)sG_s(\vy_{k})^{T} (\vy_{k}-\vx_{k})}_{\RNum{2}_1}\underbrace{- t_{k+1}sG_s(\vy_{k})^{T} (\vy_{k}-{\vx^\star})}_{\RNum{2}_2} \underbrace{+\frac{t_{k+1}^2}{2}\left\| sG_s(\vy_{k}) \right\|^2}_{\RNum{2}_3}.
    \end{array}
\end{equation}
By setting $\vy=\vy_k, \vx=\vx_k$ or $\vy=\vy_k, \vx=\vx^{\star}$ in \eqref{bas2}  one can readily obtain that
\begin{numcases}{~}
  \begin{array}{ll}
     F\left(\vx_{k+1}\right)-F(\vx_{k}) \leq G_s(\vy_k)^{T}(\vy_k-\vx_k)-\big(s-\frac{Ls^2}{2}\big)\big\|G_s(\vy_k)\big\|^{2}-\frac{\mu}{2}\|\vy_k-\vx_k\|^{2},   \end{array}\label{ykxk} \\[1mm]
       \begin{array}{ll}
 F\left(\vx_{k+1}\right)-F^\star  \leq G_s(\vy_k)^{T}(\vy_k-{\vx^\star})-\big(s-\frac{Ls^2}{2}\big)\big\|G_s(\vy_k)\big\|^{2}-\frac{\mu}{2}\|\vy_k-{\vx^\star}\|^{2}.  \end{array}\label{ykxstar}
\end{numcases}
Using the inequality \eqref{ykxk}, one can find an upper bound for the sum of  $\RNum{1}_1$ in \eqref{diff part 1} and $\RNum{2}_1$ in \eqref{diff part 2} that
\begin{equation}\label{ineq: 11-21}
    \begin{array}{ll}
        \RNum{1}_1+\RNum{2}_1&=s(t_{k+1}-1)t_{k+1}\left[ F(\vx_{k+1})-F(\vx_{k})-G_s(\vy_{k})^{T} (\vy_{k}-\vx_{k}) \right]\\[1mm]
        &\leq (t_{k+1}-1)t_{k+1}\Big[ -\left(1-\frac{Ls}{2}\right)\left\|sG_s(\vy_k)\right\|^{2}-\frac{\mu s}{2}\|\vy_k-\vx_k\|^{2}  \Big].
    \end{array}
\end{equation}
By \eqref{ykxstar} one can get an upper bound of the sum of
$\RNum{1}_2$ in \eqref{diff part 1} and $\RNum{2}_2$ in \eqref{diff part 2} as
\begin{equation}\label{ineq: 12-22}
    \begin{array}{ll}
         \RNum{1}_2+\RNum{2}_2
    &=s(t_{k+2}^2-t_{k+2}-t_{k+1}^2+t_{k+1})\left[F\left(\vx_{k+1}\right)-F^{\star}\right]- t_{k+1}sG_s(\vy_{k})^{T} (\vy_{k}-{\vx^\star})\\[1mm]
        &\leq st_{k+1}\left[ F(\vx_{k+1})-F^\star-G_s(\vy_{k})^{T} (\vy_{k}-{\vx^\star}) \right]
        % -s\left[t_{k+2}^2-t_{k+2}-t_{k+1}^2\right]\left[f\left(\vx_{k+1}\right)-f^{\star}\right]
        \\[1mm]
        &\leq t_{k+1}\left[ -\left(1-\frac{Ls}{2}\right)\left\|sG_s(\vy_k)\right\|^{2}-\frac{\mu s}{2}\|\vy_k-{\vx^\star}\|^{2}  \right],
    \end{array}
\end{equation}
where the first inequality follows from the Nesterov rule in \eqref{NestRule}.
Combing
\eqref{diff part 1}, \eqref{diff part 2}, \eqref{ineq: 11-21} and \eqref{ineq: 12-22} together implies
$$
\begin{array}{ll}
     \cE_{k+1}-\cE_k
     & \le ~ (t_{k+1}-1)t_{k+1}\left[ -\left(1-\frac{Ls}{2}\right)\left\|sG_s(\vy_k)\right\|^{2}-\frac{\mu s}{2}\|\vy_k-\vx_k\|^{2}  \right]\\[1mm]
    &\quad  +t_{k+1}\left[ -\left(1-\frac{Ls}{2}\right)\left\|sG_s(\vy_k)\right\|^{2}-\frac{\mu s}{2}\|\vy_k-{\vx^\star}\|^{2}  \right]+\frac{t_{k+1}^2}{2}\left\| sG_s(\vy_{k}) \right\|^2
    \\[1mm]
    &= -\frac{ t_{k+1}^2(1-sL)}{2}\left\|sG_s(\vy_k)\right\|^2-\frac{\mu s (t_{k+1}-1)t_{k+1}}{2}\left\|\vy_k-\vx_k\right\|^2-\frac{\mu s t_{k+1}}{2}\left\|\vy_k-{\vx^\star}\right\|^2,
\end{array}
$$
so that \eqref{ineq: diff of lyap apg} holds. On the other hand, taking \eqref{ykxstar} into the first part in \eqref{lyap: APG}, one has
\begin{equation}
\label{lyap nag k+1: 1st}
\begin{array}{ll}
\cE_{k+1}^p &
\le
s(t_{k+2}^2-t_{k+2})\left[
G_s(\vy_k)^{T}(\vy_k-{\vx^\star})-\left[s-\frac{Ls^2}{2}\right]\left\|G_s(\vy_k)\right\|^{2}-\frac{\mu}{2}\|\vy_k-{\vx^\star}\|^{2}
\right]
\\[2mm]
&\le
s(t_{k+2}-1)t_{k+2}\big(
\big[\frac{1}{2\mu}-s+\frac{Ls^2}{2}\big]\left\|G_s(\vy_{k}) \right\|^2
\big). 
\end{array}
\end{equation}
Meanwhile, one has for any $u>0,v>0$, and $w>0$ that
\begin{equation}\label{lyap nag k+1: 2nd}
\begin{array}{ll}
\cE_{k+1}^m
&=\frac{1}{2}\left\|(t_{k+2}-1) (\vy_{k+1}-\vx_{k+1})+\left(\vy_{k+1}-\vx^{\star}\right)\right\|^{2}
\\[1mm]
&=\frac{1}{2}\left\|t_{k+2} (\vy_{k+1}-\vx_{k+1})+\left(\vx_{k+1}-\vx^{\star}\right)\right\|^{2}
\\[1mm]
&=\frac{1}{2}\left\|(t_{k+1}-1) (\vx_{k+1}-\vx_{k})+\left(\vx_{k+1}-\vx^{\star}\right)\right\|^{2}
\\[1mm]
&
=\frac{1}{2}\left\|t_{k+1}(\vx_{k+1}-\vy_{k})+(t_{k+1}-1)(\vy_k-\vx_k)+\left(\vy_{k}-\vx^{\star}\right)\right\|^{2}
\\[1mm]
&\leq  \frac{1+u+1/v}{2}t_{k+1}^2\left\| sG_s(\vy_{k}) \right\|^2+\frac{1+v+w}{2}(t_{k+1}-1)^2\left\|\vy_k-\vx_k\right\|^2
+\frac{1+1/w+1/u}{2}\left\|\vy_k-{\vx^\star}\right\|^2.
\end{array}
\end{equation}
Summing the inequalities \eqref{lyap nag k+1: 1st} and \eqref{lyap nag k+1: 2nd} together implies \eqref{upper bound: lyapk+1 apg} and this completes the proof.
\end{proof}

Based on \Cref{lemma: diff apg}, we demonstrate the global R-linear convergence of Algorithm \ref{algoapgc} as follows.
\begin{theorem}
\label{thm: APG}
Under Assumption \ref{ass:blanket}, let $\{\vx_k\}$ and $\{\vy_k\}$ be the sequences generated by Algorithm \ref{algoapgc} with step size $s\in(0,1/L)$
and $\{\cE_k\}$ be the Lyapunov sequence defined by \eqref{lyap: APG}. 
Then, it holds for all $k\ge 0$ that
\begin{equation}
\label{lyap decrease apg}
    \cE_{k+1}\leq \rho\cE_k, \quad\mbox{with} \quad  \rho: = 1- \frac{(1-L s)\mu s}{3} \in (1-\mu s ,1),
\end{equation}
and for all $k\ge 1$ that
\begin{equation}
\label{apg linear}
F(\vx_{k}) - F^\star\le
\frac{ \rho^k \|\vx_0-\vx^\star\|^2 }{2s(t_{k+1}-1)t_{k+1}}  .
\end{equation}
Moreover, if $t_k$ is chosen as \eqref{step:NAG beta} or \eqref{step:NAG r}, it holds that
\begin{equation}
\label{vx45}
F(\vx_{k}) - F^\star \leq \mathcal{O}({\rho}^{k}/k^2).
\end{equation}
%and the term $\mathcal{O}({\rho}^{k}/k^2)$ is a tight lower bound of the convergence rate.
%right-hand term in \eqref{apg linear}.
%with $\rho\le 1- \frac{(1-L s)\mu s}{3}$.
\end{theorem}

\begin{proof}
Given $k\ge 0$, let $ D_k:\RR^3\to \RR$ be the function
\begin{equation}\label{def: dk}
\begin{array}{l}
\begin{array}{r}
 D_k(u,v,w):= \max \left\{
\frac{(t_{k+2}-1)t_{k+2}}{t_{k+1}^2(1-sL)}\left(\frac{1}{s\mu}-2+Ls\right) +\frac{(1+u+1/v)}{1-sL},
\frac{(1+v+w)(t_{i+1}-1)}{\mu s t_{k+1}},
\frac{1+1/w+1/u}{\mu s t_{k+1}}\right\},
\end{array}
\end{array}
\end{equation}
and define ${\cal D}_k:=\inf\limits_{u,v,w>0} D_k(u,v,w)$. 
From  \Cref{lemma: diff apg} we know that
$\cE_{k+1} \leq {\cal D}_k ( \cE_k-\cE_{k+1})$.
 Since $s<1/L$, one has
$\frac{1}{s\mu}+Ls\ge \frac{1}{Ls}+Ls > 2$.
Consequently, all the three terms in the brace of \eqref{def: dk} are closed proper convex functions, then so is $D_k$.
The first term in the brace of \eqref{def: dk} can imply that ${\cal D}_k\ge \frac{1}{1-Ls}$.
%Meanwhile, the third term in the brace implies that $\cD_0> \frac{1}{\mu s}$.
Moreover, according to $t_{k+1}>1$ and $t_{k+2}^2-t_{k+2}\le t_{k+1}^2$ by \eqref{NestRule} one has
\[
\begin{array}{ll}
{\cal D}_k+1&\le
D_k(1,1,1)+1 \le
1+ \max \left\{
\frac{1}{1-sL}\left(\frac{1}{s\mu}+1+Ls\right) ,\frac{3}{\mu s }
\right\}
\\[2mm]
&
\le \max \left\{
\frac{1}{1-sL}\left(\frac{1}{s\mu}+2\right) ,\frac{3+\mu s}{\mu s }
\right\}
=
\frac{1}{s\mu}\max \left\{
\frac{1+2 s\mu }{1-sL} ,{3+\mu s}
\right\}
<\frac{3}{(1-sL)\mu s}.
\end{array}
\]
Then we have
$\cE_{k+1}\leq
\big(1-\frac{1}{{\cal D}_k+1}\big)\cE_k$
and $1<{\cal D}_k+1 \le\frac{3}{\mu s(1-Ls)}$. 
By setting $\rho=1- \frac{(1-L s)\mu s}{3}\ge\sup_{i\ge1}
\{1-\frac{1}{{\cal D}_i+1}\}$, 
we have $\rho\in(1-\mu s,1)$ and \eqref{lyap decrease apg} holds. 
Taking \eqref{lyap: APG} into account yields \eqref{apg linear}.
Moreover, if $t_k$ satisfies \eqref{step:NAG beta} or \eqref{step:NAG r}, then $t_k=\mathcal{O}(k)$, so \eqref{vx45} holds. This completes the proof. 
\end{proof}

\begin{remark}\label{remark tk<k}
According to \eqref{NestRule}, the sequence $\{t_k\}$ satisfies 
$$
\begin{array}{l}
t_1=1,\quad t_{k+1}-t_k\leq {t_k}/(t_{k+1}+t_k)<1\Longrightarrow t_k<k~~\mbox{ for all } k\ge1.
\end{array}
$$
Therefore, the right-hand term in \eqref{vx45} is the tight lower bound for the right-hand term in \eqref{apg xkx0}.

\end{remark}

\begin{remark}
    The fixed step size \(s < {1}/{L}\) can be relaxed using a sequence \(\{s_k\}\) defined by a backtracking line search satisfying condition \eqref{ineq: f dec}. Without providing a proof, we assert that a similar Lyapunov analysis applies when the step size \(s_k\) is monotonically decreasing and bounded from below.
\end{remark}

\begin{remark}
Let $\{\vx_k\}$ be the sequence generated by the fixed restarted APG method, with a restart interval of $K\in\mathbf{Z}_{++}$ such that $\bar\rho:=\sqrt[K]{1/[\mu s (t_{K+1}-1)t_{K+1}]}\in(0,1)$. Then, according to \Cref{thm: APG}, it has
    $$
    \begin{array}{l}
         F(\vx_{mK}) - F^\star \le \frac{\rho^{K} }{\mu s(t_{K+1}-1)t_{K+1}}\cdot\left( F(\vx_{(m-1)K}) - F^\star \right)\le\cdots\le(\bar\rho\cdot\rho)^{mK}\left( F(\vx_{0}) - F^\star \right),
    \end{array}
    $$
    where $m\in\mathbf{Z_{++}}$ is the number of restarting.
Such a convergence result improves the linear rate from $\bar \rho$ in \cite{Nemirovski2007EFFICIENTMI, Nesterov2013GradientMF} to $\bar\rho\cdot\rho$, and the enhancement is also effective for the variants of the fixed restart strategy summarized in the Introduction. 
\end{remark}

%The convergence rate analysis of adaptive restarted schemes is much involved. 
The analysis for adaptive restarting in the next section requires the following result regarding the linear convergence of the iteration sequence generated by \Cref{algoapgc}.

%However, it is challenging to verify whether the inequality $\mu s(t_{K_i+1}-1)t_{K_i+1}>1$ consistently holds for adaptive restarted schemes, where $K_i$ represents the length of the $i$th restart interval.
%To address this issue, 
%for establishing the linear convergence of the gradient restarted APG method in the next section.
%, which addresses the previously mentioned difficulty.

\begin{proposition}
\label{thm: apgseq}
Under Assumption \ref{ass:blanket}, let $\{\vx_k\}$ and $\{\vy_k\}$ be the sequences generated by Algorithm \ref{algoapgc} with step size $s\in(0,1/L)$. 
Then, it holds for all $k\geq 1$ that
\begin{equation}
\begin{array}{c}
\|\vx_k-\vx^\star\|^2
 \leq (1-\mu s)
 \max\big\{ \frac{1}{t_k}
 , 1-\mu s (t_k-1) \big\}
 \cdot
  \rho^{k-1}
 \left\|\vx_{0}-\vx^\star\right\|^{2} 
    \end{array}   
\label{apg xkx0}
\end{equation}
with $\rho=1- {(1-L s)\mu s}/{3} \in \left(1-\mu s,1\right)$.
Moreover, if $t_k$ is chosen as \eqref{step:NAG beta} or \eqref{step:NAG r}, it has
\begin{equation*}
\begin{array}{l}
     \|\vx_k-\vx^\star\|^2
\leq \mathcal{O}({\rho}^{k-1}/k), 
\end{array}
\end{equation*}
and the term $\mathcal{O}(\rho^{k-1}/k)$ is a tight lower bound for the right-hand term in \eqref{apg xkx0}.
\end{proposition}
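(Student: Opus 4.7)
The plan is to run the standard accelerated-proximal-gradient Lyapunov analysis while keeping the strong-convexity modulus $\mu$ alive in every three-point inequality, so that it enters the one-step decrease and can then be converted into the geometric rate $\bar\rho$. Concretely, I would start from the inequality
\[
F(\vy - sG_s(\vy))\le F(\vz) + \langle G_s(\vy), \vy-\vz\rangle - \tfrac{s}{2}\|G_s(\vy)\|^2 - \tfrac{s(1-Ls)}{2}\|G_s(\vy)\|^2 - \tfrac{\mu}{2}\|\vz-\vy\|^2,
\]
valid for all $\vz\in\RR^n$, obtained from the descent lemma for $f\in\mathcal{F}_{L}$ combined with the optimality condition of $\prox_{sg}$ and strong convexity of $f$. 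Applying it at $\vy=\vy_k$ with $\vz=\vx^\star$ and with $\vz=\vx_k$, combining the two resulting inequalities using Nesterov's weights $1$ and $t_{k+1}-1$, multiplying through by $t_{k+1}$, and invoking $t_{k+1}^2-t_{k+1}\le t_k^2$ together with the definition of $\vy_k$, I would reach a one-step decrease
\[
\Phi_{k+1}\le\Phi_k-\Delta_k,\qquad \Phi_k:=t_k^2(F(\vx_k)-F^\star)+\tfrac12\|\vu_k\|^2,\qquad \vu_k := t_k\vy_k-(t_k-1)\vx_k-\vx^\star,
\]
where $\Delta_k$ collects the three nonnegative residuals $\tfrac{s(1-Ls)}{2}t_{k+1}^2\|G_s(\vy_k)\|^2$, $\tfrac{\mu t_{k+1}}{2}\|\vy_k-\vx^\star\|^2$, and $\tfrac{\mu t_{k+1}(t_{k+1}-1)}{2}\|\vy_k-\vx_k\|^2$, none of which appear in the convex-only FISTA analysis.

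The crux is to bound $\Delta_k$ below by a fixed fraction of the current potential $\Phi_k$. Using the identity $sG_s(\vy_k)=\vy_k-\vx_{k+1}$ together with the relations between $\vu_k$, $\vy_k$, and $\vx_k$, I would decompose $\vu_k = t_k(\vy_k-\vx^\star)-(t_k-1)(\vx_k-\vx^\star)$ and apply a weighted Cauchy--Schwarz, together with the three-point inequality instantiated at $\vz=\vx^\star$, to absorb both $t_k^2(F(\vx_k)-F^\star)$ and $\tfrac12\|\vu_k\|^2$ into the three residuals. A careful choice of splitting weights then yields
\[
\Delta_k \ge \tfrac{(1-Ls)\mu s}{3}\Phi_k,
\]
so that $\Phi_{k+1}\le\bar\rho\,\Phi_k$ with $\bar\rho = 1-\tfrac{(1-Ls)\mu s}{3}$. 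Locating the split that produces exactly the constant $1/3$ is the main technical obstacle; a coarser splitting would still yield geometric decay but with a worse constant. Unrolling the recursion gives $\Phi_k\le\bar\rho^{k-1}\Phi_1$, and a single proximal gradient step from $\vx_0$ combined with strong convexity bounds $\Phi_1$ by a constant multiple of $(1-\mu s)\|\vx_0-\vx^\star\|^2$, which is responsible for the leading $(1-\mu s)$ factor in \eqref{apg xkx0}.

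To pass from $\Phi_k$ to $\|\vx_k-\vx^\star\|^2$ I would combine two independent consequences of the potential. First, strong convexity of $F$ gives $\tfrac{\mu}{2}\|\vx_k-\vx^\star\|^2\le F(\vx_k)-F^\star\le \Phi_k/t_k^2$, contributing the $1/t_k$-type coefficient. Second, expanding $\vu_k = t_k(\vy_k-\vx_k)+(\vx_k-\vx^\star)$ and using strong convexity once more together with the Nesterov identity $t_{k+1}^2-t_{k+1}\le t_k^2$ contributes the complementary coefficient $1-\mu s(t_k-1)$. Taking the smaller of the two bounds produces the $\max\{1/t_k,\,1-\mu s(t_k-1)\}$ factor in \eqref{apg xkx0}. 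Finally, for $t_k$ chosen by \eqref{step:NAG beta} or \eqref{step:NAG r} one has $t_k=\Theta(k)$, so $1/t_k=\Theta(1/k)$ while $1-\mu s(t_k-1)$ becomes negative for all sufficiently large $k$; the max is therefore asymptotically $1/t_k=\Theta(1/k)$, which yields both the $O(\bar\rho^{k-1}/k)$ upper bound and the tight $\Omega(\bar\rho^{k-1}/k)$ lower bound on the right-hand side of \eqref{apg xkx0}.
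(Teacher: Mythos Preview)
Your Lyapunov setup and the Q-linear contraction $\Phi_{k+1}\le\bar\rho\,\Phi_k$ are essentially the paper's approach; the paper uses the slightly differently indexed potential $\cE_k=s(t_{k+1}-1)t_{k+1}(F(\vx_k)-F^\star)+\tfrac12\|\vu_k-\vx^\star\|^2$ with $\vu_k=(t_{k+1}-1)(\vy_k-\vx_k)+\vy_k$, and simply cites \cite[Theorem~5]{baochenli2023} for $\cE_{k+1}\le\rho_k\cE_k$ with $\sup_k\rho_k\le\bar\rho$, rather than re-deriving the constant $1/3$.

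The gap is in your extraction of $\|\vx_k-\vx^\star\|^2$ from the potential. Route~1 gives $\|\vx_k-\vx^\star\|^2\le\tfrac{2}{\mu t_k^2}\Phi_k$, a $1/(\mu t_k^2)$--type coefficient rather than the $(1-\mu s)/t_k$ in the statement. Route~2 is not fleshed out, and in any case ``taking the smaller of the two bounds'' would produce a $\min$, not the $\max$ in \eqref{apg xkx0}; moreover $1-\mu s(t_k-1)<0$ for large $k$, so Route~2 cannot stand alone as an upper bound there. Finally, the $(1-\mu s)$ prefactor does not come from bounding $\Phi_1$.

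The paper obtains the precise $\max$ structure by an induction rather than by comparing two independent bounds. Keeping the energy term in the Lyapunov gives
\[
\|\vu_{k-1}-\vx^\star\|^2 \;\le\; \bar\rho^{\,k-1}\|\vx_0-\vx^\star\|^2 \;-\; \mu s(t_k-1)t_k\,\|\vx_{k-1}-\vx^\star\|^2,
\]
and the convex combination $\vy_{k-1}=\tfrac{1}{t_k}\vu_{k-1}+\tfrac{t_k-1}{t_k}\vx_{k-1}$ plus Jensen then yields
\[
\|\vy_{k-1}-\vx^\star\|^2 \;\le\; \Bigl(1-\tfrac{1}{t_k}-\mu s(t_k-1)\Bigr)\|\vx_{k-1}-\vx^\star\|^2 \;+\; \tfrac{1}{t_k}\,\bar\rho^{\,k-1}\|\vx_0-\vx^\star\|^2.
\]
The $(1-\mu s)$ prefactor is the one-step proximal-gradient contraction $\|\vx_k-\vx^\star\|^2\le(1-\mu s)\|\vy_{k-1}-\vx^\star\|^2$, obtained directly from your three-point inequality at $\vz=\vx^\star$. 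Feeding the inductive hypothesis $\|\vx_{k-1}-\vx^\star\|^2\le(1-\mu s)\bar\rho^{\,k-2}\|\vx_0-\vx^\star\|^2$ into the display, replacing the (possibly negative) coefficient by $\max\{0,\cdot\}$, using $1-\mu s\le\bar\rho$, and then applying the PG contraction closes the induction and yields exactly $(1-\mu s)\max\{1/t_k,\,1-\mu s(t_k-1)\}\bar\rho^{\,k-1}$.
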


\begin{proof} 
We prove \eqref{apg xkx0} by induction. Given $k\ge0$, according to \eqref{ykxstar} and $s < 1 / L$, one has
$$
\begin{array}{ll}
&F\left(\vx_{k+1}\right)-F(\vx^\star)
\leq G_s(\vy_k)^T(\vy_k-\vx^\star)-\frac{s}{2}\left\|G_s(\vy_k)\right\|^2-\frac{\mu}{2}\left\|\vy_k-\vx^\star\right\|^2
\\
=&\frac{1}{2s}\left(\|\vy_k-\vx^\star\|^2-\|\vx_{k+1}-\vx^\star\|^2\right)
-\frac{\mu}{2}\|\vy_k-\vx^\star\|^2
=\frac{1}{2s}\left((1-\mu s)\|\vy_k-\vx^\star\|^2-\|\vx_{k+1}-\vx^\star\|^2 \right). 
\end{array}
$$
Since $F\left(\vx_{k+1}\right)\ge F(\vx^\star) $, we have
\begin{equation}\label{PG descent}
\begin{array}{l}
     \|\vx_{k+1}-\vx^\star\|^2\leq (1-\mu s)\|\vy_k-\vx^\star\|^2. 
\end{array}
\end{equation}
Note that $t_1=1$ and $\vy_0=\vx_0$, the equality \eqref{PG descent} implies that \eqref{apg xkx0} holds when $k=1$. Assume that \eqref{apg xkx0} holds for a certain $k\ge2$. Then, according to $t_k\ge1$, $\vx_k$, one has $ \left\|\vx_k-\vx^\star\right\|^2 \leq\left(1-\mu s\right)
\rho^{k-1}
\left\|\vx_{0}-\vx^\star\right\|^{2}$. Define 
$a_k:=s(t_{k+1}-1)t_{k+1}$ and $\vu_k:=(t_{k+1}-1) (\vy_k-\vx_k)+\vy_{k}$.
Then the inequality \eqref{lyap decrease apg} and the definition \eqref{lyap: APG} yields that
\begin{equation*}
%\label{eq:estimatezk1}
\begin{array}{c}
     \frac{1}{2}\|\vu_k-\vx^\star\|^2
\leq
\rho^k
\cE_0-a_k\left(F\left(\vx_{k}\right)-F^{\star}\right)
\leq  \frac{\rho^k}{2}\left\|\vx_{0}-\vx^\star\right\|^{2}-\frac{\mu a_k}{2}\left\|\vx_{k}-\vx^\star\right\|^{2}, 
\end{array}
\end{equation*}
where $\rho = 1-{(1-Ls)\mu s}/{3}$.
Meanwhile, note that
$\vy_{k}=\frac{1}{t_{k+1}}\vu_{k}+\frac{t_{k+1}-1}{t_{k+1}}\vx_{k}$, one has
\begin{equation}
\label{apg ykxkx0}
\begin{array}{rl}
     \left\|\vy_{k}-\vx^\star\right\|^2
&\le
\big(1-\frac{1}{t_{k+1}}\big )
 \|\vx_{k}-\vx^\star \|^2
 +\frac{1}{t_{k+1}}\left\|\vu_{k}-\vx^\star\right\|^2
\\
&\le
\big(1-\frac{1}{t_{k+1}}
-\mu s (t_{k+1}-1)\big)
\left\|\vx_{k}-\vx^\star\right\|^2
+\frac{\rho^{k}}{t_{k+1}}
{\left\|\vx_{0}-\vx^\star\right\|^{2}}\\
&\le
\big[\left(1-\mu s\right)
\rho^{k-1}\max \big\{0,
1-\frac{1}{t_{k+1}}-\mu s(t_{k+1}-1)\big\}
+\frac{\rho^{k}}{t_{k+1}} \big]
\left\|\vx_0-\vx^\star\right\|^2.
\end{array}
\end{equation}
Taking \eqref{PG descent} and $\rho> 1-\mu s$ into account yields that \eqref{apg xkx0} holds for $k+1$.
% $$
% \begin{array}{l}
%      \|\vx_{k+1}-\vx^\star\|^2\leq (1-\mu s)\left\|\vy_k-\vx^\star\right\|^2\\
% \leq (1-\mu s) \max \big\{0,
% 1-\frac{1}{t_{k+1}}-\mu s(t_{k+1}-1)\big\}
% \left\|\vx_k-\vx^\star\right\|^2
% +\frac{1-\mu s}{t_{k+1}} \rho^{k}
% \left\|\vx_{0}-\vx^\star\right\|^{2}\\
% \leq (1-\mu s)\max \big\{0,
% 1-\frac{1}{t_{k+1}}-\mu s(t_{k+1}-1)\big\}
% \left(1-\mu s\right)
% \rho^{k-1}
% \left\|\vx_{0}-\vx^\star\right\|^{2}
% +\frac{1-\mu s}{t_{k+1}}
% \rho^{k}
% \left\|\vx_{0}-\vx^\star\right\|^{2}
% % \\
% % \leq (1-\mu s)\max \big\{\frac{1}{t_{k+1}},
% % 1-\mu s(t_{k+1}-1)\big\}
% % \rho^{k}
% % \left\|\vx_{0}-\vx^\star\right\|^{2}
% ,
% \end{array}
% $$
% so that \eqref{apg xkx0} holds for $k+1$ since $\rho=1- {(1-L s)\mu s}/{3}> 1-\mu s$.
Moreover, when $k$ is sufficiently large, 
it holds $\max \big\{\frac{1}{t_{k}}, 
1-\mu s(t_{k}-1)\big\}=\frac{1}{t_k}$, so we have
$\|\vx_k-\vx^\star\|^2
\leq \mathcal{O}(\rho^{k-1}/t_k)$. 
The proof is completed since $t_k=\mathcal{O}(k)$ when $t_k$ satisfies \eqref{step:NAG beta} or \eqref{step:NAG r}.
\end{proof}

\begin{remark}
    One also has $\|\vx_0-\vx^\star\|^2\le
\frac{ \rho^k \|\vx_0-\vx^\star\|^2 }{\mu s(t_{k+1}-1)t_{k+1}} $ from the inequality \eqref{apg linear} and strong convexity of $F$. This $\mathcal{O}({\rho}^{k-1}/k^2)$ rate is better than that in \eqref{thm: apgseq} when $k$ is large enough, but worse when $k$ is small since the coefficient is large. Moreover, the third iteration of \eqref{APG} shows that $\vy_{k+1}\in\conv\{\vx_{k+1},\vx_k\}$, which implies the globally R-linear convergence of $\{\vy_k\}$.
\end{remark}

\subsection{Convergence rate analysis of APG with gradient restarting}
\label{sec:apgc}
This subsection analyzes the gradient restarted APG method given by \Cref{al: reapgc}. 

\begin{algorithm}[ht]
\caption{Gradient restarted APG method for problem \eqref{obj: nonsmooth}}
\label{al: reapgc}
\begin{algorithmic}
\Require
Initial point $\vx_0=\vy_0$, step size $s>0$, $j=1$ and parameter sequence $\{t_k\}$ satisfying \eqref{NestRule}.  
\Ensure The minimizing sequences $\{\vx_k\}$ and $\{\vy_k\}$.
\For {$k=1,2,\ldots$}{
\State  $\vz_{k} \leftarrow \prox_{s g}(\vy_{k-1}-s\nabla f(\vy_{k-1}))$
\State $\vy_{k} \leftarrow \vz_{k}+\frac{t_{j}-1}{t_{j+1}}\left(\vz_{k}-\vx_{k-1}\right)$
\If{$\langle \vz_k-\vx_{k-1},\vy_{k-1}-\vz_k\rangle>0$}
\State  $\vx_k\leftarrow \prox_{s g}(\vx_{k-1}-s\nabla f(\vx_{k-1}))$
\State $\vy_k\leftarrow \vx_k,\quad j \leftarrow 1$
\Else
\State $\vx_k\leftarrow \vz_k,\quad j \leftarrow j+1$

\EndIf
}
\EndFor
\end{algorithmic}
\end{algorithm}

We have the following result regarding the global R-linear convergence of \Cref{al: reapgc}. 
\begin{theorem}\label{thm: reapgc}
Under Assumption \ref{ass:blanket}, let $\{\vx_k\}$ and $\{\vy_k\}$ be the sequences generated by \Cref{al: reapgc} with $s< 1/L$.
Then, it holds for all $k\ge 1$ that %one has
\begin{equation}\label{linear-apg-restart}
\begin{array}{l}
     \|\vx_k-\vx^\star\|^2\leq  (1-\mu s)\rho^{k-1}\|\vx_0-\vx^\star\|^2 \quad\mbox{with} \quad  \rho = 1- {(1-L s)\mu s}/{3}. 
\end{array}
\end{equation}
%with $\rho=1- \frac{(1-L s)\mu s}{3} \in \left(1-\mu s,1\right)$.
% Specifically, the constants $C$ and $\rho$ in \eqref{linear-apg-restart} satisfy
% \begin{equation}
% \label{choice-C-rho}
%     C=\frac{1-\mu s}{\rho},\quad\mbox{and}\quad \rho=1- \frac{(1-L s)\mu s}{3}.
% \end{equation}
%where $\rho$ is given in Proposition~\ref{thm: apgseq}.
Moreover, the iteration sequence $\{\vy_k\}$ and the function value sequence $\{F(\vx_k)\}$ also converge R-linearly with the same rate.
\end{theorem}

Before providing the proof, we first introduce some definitions. The first restart time of \Cref{al: reapgc} with initial point $\vx_0$ is denoted by
$$
\begin{array}{l}
     K(\vx_0): =\max\left\{ k\in\mathbb{N} \mid \left\langle \vy_{l-1}-\vx_{l},\vx_{l}-\vx_{l-1}  \right\rangle\leq 0\mbox{ for all } l\in [k]\right\}. 
\end{array}
$$
For the infinite sequence $\{\vx_i\}_{i=0}^{\infty}$ generated by the Algorithm \ref{al: reapgc}, define the number of iterations during the $i$-th restart interval by $K_i$ and use $S_i$ to denote the total number of iterations before the $i$-th restarting.
Then, it holds that 
\begin{equation}\label{def: Ki}
\begin{cases}
K_i=K(\vx_{S_{i-1}},f),\\
S_i=S_{i-1}+K_i
\end{cases}
    \quad \mbox{for}\quad i=1,2,\ldots
\end{equation}
with the initial value $S_0=K_0=0$. Moreover, given a $k\ge1$, let $m_k$ be the number of restarts before the $k$th step, that is
\begin{equation}\label{def: m_k}
\begin{array}{l}
     m_k=\max\{i|S_i<k\}.
\end{array}  
\end{equation}

\begin{proof}[\textbf{Proof of Theorem~\ref{thm: reapgc}}]

Given $k\ge 1$, according to the definition of $\{K_i\},\{S_i\}$ \eqref{def: Ki} and $m_k$ \eqref{def: m_k}, we have $k\in(S_{m_k},S_{m_{k}+1}]$ and $k=S_{m_k}+j$ for some $j\in[K_{m_k+1}]$. It is from \Cref{thm: apgseq} and $t_k\ge1 $ that
\begin{equation*}
\begin{array}{l}
     \|\vx_k-\vx^\star\|^2
     \leq (1-\mu s)\rho^{j-1} \|\vx_{S_{m_k}}-\vx^\star\|^2
     \le \cdots
\leq(1-\mu s)^{m_k+1} \rho^{j-1-m_k+\sum_{i=1}^{m_k} K_i}\left\|\vx_{0}-\vx^\star\right\|^{2},%\leq\frac{1-\mu s}{\rho}\rho^{k}\left\|\vx_{0}-\vx^\star\right\|^{2},
\end{array}
\end{equation*}
which implies \eqref{linear-apg-restart} by $k=j+\sum_{i=1}^{m_k} K_i$ and $\rho=1- (1-L s)\mu s/{3}>1-\mu s$.
Note that $\vy_k=\vx_k$ when restarting occurs and $\vy_k=\vx_{k}+\frac{t_{j}-1}{t_{j+1}}\left(\vx_{k}-\vx_{k-1}\right)$ for non-restart steps. Therefore, we know that $\vy_k\in\conv\{\vx_{k},\vx_{k-1}\}$, which implies 
$$
\begin{array}{l}
     \left\|\vy_k-\vx^\star\right\|^2\le \max\{\left\|\vx_k-\vx^\star\right\|^2,\left\|\vx_{k-1}-\vx^\star\right\|^2\}\le (1-\mu s)\rho^{k-2}\|\vx_0-\vx^\star\|^2\quad\mbox{for all }k\ge2.
\end{array}
$$
Moreover, according to \eqref{ykxstar} and $s < 1 / L$, one has for all $k\ge2$ that
$$
\begin{array}{l}
F\left(\vx_{k+1}\right)-F(\vx^\star)
\leq G_s(\vy_k)^T(\vy_k-\vx^\star)-\frac{s}{2}\left\|G_s(\vy_k)\right\|^2\le \frac{1}{2s}\left\|\vy_k-\vx^\star\right\|^2.
\end{array}
$$
Consequently, the global R-linear convergence of  $\{\vx_k\}$, $\{\vy_k\}$ and $\{F(\vx_k)\}$ are constructed.

\end{proof}

\begin{remark}
In fact, \Cref{thm: reapgc} establishs the global R-linear convergence for all restarted/non-restarted APG methods with $s\in(0,1/L)$ under Assumption \ref{ass:blanket}, including the gradient-based \Cref{al: reapgc}. 
\end{remark}

One can see that the parameter $\rho$ in \Cref{thm: reapgc} is the same as the one in \Cref{thm: APG} for the original APG method. 
When a uniform upper-bound for the restart intervals $\{K_i\}_{i=1}^{\infty}$ is available, one can obtain a smaller factor for the convergence rate, as stated in the following proposition.

\begin{proposition}
    \label{coro: bouned reapg}
Under Assumption \ref{ass:blanket}, let $\{\vx_k\}$ and $\{\vy_k\}$ be the sequences generated by \Cref{al: reapgc} with $s< 1/L$.
Assume the restart intervals $\{K_i\}$, defined by \eqref{def: Ki}, are uniformly bounded from above by some integer $\overline{\cal K}\ge2$.
Then, there exists a positive constant $\hat\rho$ such that
\begin{equation}
\label{coro-1-results}
    \left\|\vx_{k}-\vx^\star\right\|^{2}
\leq
(1-\mu s)\hat \rho^{k-1} \|\vx_0-\vx^\star\|^2
\quad \mbox{for all }k\ge 1,
\end{equation}
where $0<\hat\rho<\rho:=1- \frac{(1-L s)\mu s}{3}$. Moreover, the sequences $\{\vy_k\}$ and $\{F(\vx_k)\}$ also converge R-linearly with the same rate.
\end{proposition}

\begin{proof} 
The fact $x_0=y_0$ and $ x_1=y_1$ imply that $\left\langle \vy_{0}-\vx_{1},\vx_{1}-\vx_{0}  \right\rangle\leq 0$ and  
$ \left\langle \vy_{1}-\vx_{2},\vx_{2}-\vx_{1} \right\rangle\leq 0$, respectively. Then we have $K_i\ge 2, i\ge1$, with $K_i$ defined by \eqref{def: Ki}. Define the constant $\vartheta_{\overline{\cal K}}$ as
\begin{equation}
       \label{def: vartheta}
       \begin{array}{c}
             \vartheta_{\overline{\cal K}}:=\max\limits_{l=2,3,\ldots,{\overline{\cal K}}}
\left(\bar\theta_l\right)^{1/l}\quad\mbox{with}\quad \bar\theta_l:=\max\left\{ \frac{1}{t_{l}}, 1-\mu s (t_{l}-1) \right\}, l\ge1.
       \end{array}
   \end{equation}
Since $t_l>1$ for any $l\geq 2$, we have $\vartheta_{\overline{\cal K}}\in(0,1)$.
Note that the restart intervals are bounded as $2\le K_i\le \overline{\cal K}$ for $i\ge 1$, then one has $\bar\theta_{K_i}\leq (\vartheta_{\overline{\cal K}})^{K_i}$ and $\bar\theta_{j}\leq (\vartheta_{\overline{\cal K}})^{j-1}$.
According to \Cref{thm: apgseq}, we have
    \begin{equation*}%\label{reapgc descent}
    \begin{array}{ll}
\|\vx_k-\vx^\star\|^2
&
\leq (1-\mu s)\bar \theta_{j}
\rho^{j-1}
\|\vx_{S_{m_k}}-\vx^\star\|^2 \\[1mm]
&\leq \cdots%(1-\mu s)^2  \bar \theta_{j}\bar \theta_{K_{m_k}} \rho^{j+K_{m_k}-2} \|\vx_{S_{m_k-1}}-\vx^\star\|^2
\\[1mm]
&
\leq (1-\mu s)^{m_k+1}
\left(\bar\theta_j\prod_{i=1}^{m_k} \bar \theta_{K_i}
\right)\cdot
\rho^{\left(j-m_k-1+\sum\limits_{i=1}^{m_k} K_{i}\right)}
\|\vx_0-\vx^\star\|^2,
\\[1mm]
&
\leq (1-\mu s)^{m_k+1}\left(\vartheta_{\overline{\cal K}}\right)^{\left(j-1+\sum\limits_{i=1}^{m_k} K_{i}\right)}\rho^{k-m_k-1}
\|\vx_0-\vx^\star\|^2
\\[1mm]&
= (1-\mu s)^{m_k+1}\left(\vartheta_{\overline{\cal K}}\right)^{k-1}
\rho^{k-m_k-1}
\|\vx_0-\vx^\star\|^2,\\[1mm]&
\leq (1-\mu s)\hat \rho^{k-1}
\|\vx_0-\vx^\star\|^2,
\end{array}
\end{equation*}
where $\hat \rho:=\vartheta_{\overline{\cal K}}\rho<\rho$ and the last inequality follows from $\rho\ge1-\mu s$. The proof is completed with a similar analysis of $\vy_k$ and $F(\vx_k)$ in \Cref{thm: reapgc}.
\end{proof}

\begin{remark} 
As discussed in \Cref{remark tk<k}, we have $t_k<k$ for all $k\ge1$. Taking $\hat\rho<\rho$ into account, then there exists some positive integer $\hat K$ such that 
$$
\begin{array}{c}
     \max\left\{ \frac{1}{t_k}
 , 1-\mu s (t_k-1) \right\}\ge \frac{1}{t_k}> \frac{1}{k}>\left(\frac{\hat\rho}{\rho}\right)^k
 \quad \mbox{for all}\quad k>\hat K.
\end{array}
 $$
Comparing \eqref{apg xkx0} with \eqref{coro-1-results}, we know the restarted version converges faster than the original APG method near the optimal point. 
\end{remark}

With uniformly bounded restart intervals, the linear rate of the restarted APG method improves from $\rho$ to $\hat{\rho}<\rho$ (see Corollary \ref{coro: bouned reapg}). 
In addition, when the non-smooth term $g$ is vacuous, we can get a stronger result in the sense that $\|\vx_k-\vx^*\|\leq \mathcal{O}(\prod_{i=1}^{k-1}\eta_i)$ where $\{\eta_i\}_{i=1}^k$ is a monotone increasing sequence and its upper bound $\bar\eta$ is smaller than $\rho$ (See Theorem \ref{thm: renagc} in Appendix). Moreover, given uniformly bounded restart intervals, the linear rate further improves to $\hat{\eta}$ where~$\hat{\eta}<\min\{\hat{\rho},\bar \eta\}$ (See Corollary \ref{coro: bouned} in Appendix). We summarize these results in Table~\ref{comparison:rate} and provide additional details in the Appendix.

\small
\begin{table}[ht]
\footnotesize\tabcolsep 10pt
\begin{center}
    \caption{\footnotesize
Comparison of linear convergence rate between the APG and gradient restarted APG for solving $\min f+g$ with $s\in(0,1/L)$. ``GR+APG" is the gradient restarted APG method; ``UBC" denotes the condition that means the restart intervals are uniformly bounded; $^\dagger$: we assume that $t_k$ satisfies \eqref{step:NAG beta} or \eqref{step:NAG r}.}\label{comparison:rate}\vspace{-2mm}
\end{center}
\begin{center}
\begin{tabular}{cccccc}
\toprule
\multicolumn{1}{c|}{Objective function}   & \multicolumn{3}{l|}{\qquad\quad$f\in \mathcal{S}_{\mu,L},g\in\Gamma_0$}  & \multicolumn{2}{c}{$f\in \mathcal{S}_{\mu,L},g\equiv0$}  \\ \hline
\multicolumn{1}{c|}{\multirow{2}{*}{Algorithm}} & \multicolumn{1}{c|}{\multirow{2}{*}{APG}}  & \multicolumn{2}{c|}{GR+APG}  & \multicolumn{2}{c}{GR+APG}                                       \\ \cline{3-6}
\multicolumn{1}{c|}{}  & \multicolumn{1}{c|}{}   & \multicolumn{1}{c|}{Original}  & \multicolumn{1}{c|}{+UBC}  & \multicolumn{1}{c|}{Original}  & \multicolumn{1}{c}{+UBC} \\ \hline
\multicolumn{1}{c|}{$\|\vx^k-\vx^*\|$}    & \multicolumn{1}{c|}{$\mathcal{O}(\rho^{k-1}/k)^\dagger$} & \multicolumn{1}{c|}{$\mathcal{O}(\rho^{k})$} & \multicolumn{1}{c|}{$\mathcal{O}(\hat{\rho}^k), \hat{\rho}<\rho$} & \multicolumn{1}{c|}{$\mathcal{O}(\bar\eta),\bar\eta<\rho$} & \multicolumn{1}{c}{$\mathcal{O}(\hat{\eta}^k), \hat{\eta}<\min\{\hat{\rho},\bar \eta\}$} \\ \bottomrule
\end{tabular}

\end{center}
\end{table}

\normalsize

%The results in this section demonstrate improvements in the upper bound of $\|\vx_k-\vx^*\|$ for restart strategies with uniformly upper bound. 
The next section will provide a concrete example that definitively demonstrates the advantages of the gradient restarted scheme. Specifically, the gradient restarted version improves upon the optimal rate attained by the APG method for the given example.

\section{Continuous analysis of gradient restarting}
\label{sec:gradode}
In this section, motivated by the work of Su, Boyd, and Cand\'es \cite{su2016differential} for the continuous counterpart of the APG method when $g(x)\equiv0$, we study the continuous model of the gradient restarted APG under the same setting and show its provable advantage of gradient restarting.

When $g$ is vacuous, $f$ is a convex quadratic function, and $\{\beta_k\}$ satisfies \eqref{step:NAG beta} or \eqref{step:NAG r} with $r=2$, the ODE model \eqref{low-c} has the lower bound \eqref{eq:suupbound} for the objective value, which excludes the possibility of linear convergence for the solution trajectory. 
It is also proved in \cite{su2016differential} that the sublinear convergence rate $\mathcal{O}(1/t^3)$ is achievable, making it the optimal rate for the non-restarted version.
Moreover, it was shown in \cite{drori2017exact} that some quadratic function represents the worst-case scenario for first-order algorithms, in the sense of reaching the exact lower bound for first-order algorithms established in \cite{kim2016optimized} for $L$-smooth and convex objective functions. 
Therefore, even though the quadratic case is the simplest, 
it is desirable to know whether the corresponding restarted gradient version can overcome the limitation of the optimal $\mathcal{O}(1/t^3)$ rate.
In this case, since the ODE \eqref{low-c} is invariant under rotation and the linear term inf $f$ can be absorbed into the quadratic term, we only need to consider the more specific one that 
\begin{equation}
\label{obj: diag}
\begin{array}{c}
     f(\vx) = \frac{1}{2}\langle \vx,\boldsymbol \Lambda \vx\rangle,
\end{array}
\end{equation}
where $\boldsymbol \Lambda = \mathrm{diag}(\lambda_1,\lambda_2,\cdots,\lambda_n)$ and $\lambda_i>0, i\in[n]$. 

\subsection{Continuous model for gradient restarting}
Writing the initial point $\vx_0$ explicitly as $(x_{0,1},x_{0,2},\ldots,x_{0,n})$, the ODE \eqref{low-c} with $f$ being given by \eqref{obj: diag} admits the decomposition
\begin{equation}
\label{q_eq}
\left\{
\begin{array}{l}
\ddot{X}_{i}(t)+\frac{3}{t} \dot{X}_{i}(t)+\lambda_{i} X_{i}(t)=0,\ t>0,
\\[1mm]
X_{i}(0)=x_{0, i},\  \dot{X}_{i}(0)=0
\end{array}
\right.
\quad \mbox{for }
i\in [n].
\end{equation}
Let ${\vX}(t)$ be the corresponding solution. 
Without loss of generality, we assume that $\dot{\vX}(t)\neq \vx^\star$ for all $t>0$. 
Recall that, when $g(\vx)=0$, condition \eqref{restart:gradient} is equivalent to $\langle \nabla f(\vy_k), \vx_{k+1}-\vx_k\rangle >0$, whose continuous counterpart is given by \eqref{eq:gradrest}. 
Thus, we define the gradient restart time for the ODE \eqref{q_eq} by
\begin{equation}\label{grt}
\begin{array}{c}
     T^{\rm gr}(\vx_0;f)
=\sup\left\{t>0\mid \langle \nabla f(\vX(u)), \dot{\vX}(u)\rangle <0,\forall u\in (0,t)\right\}.
\end{array}
\end{equation}
Starting from $\vX(0)=\vx_0$,
$f(\vX(t))$ is monotonically decreasing along with $t\in(0,T^{\rm gr}(\vx_0;f))$ since
\begin{equation*}
\begin{array}{c}
     \frac{\mathrm{d}f(\vX(t))}{\mathrm{d}t}=\left\langle\nabla{f(\vX(t))},\dot{\vX}(t)\right\rangle<0,
\quad \forall t\in(0,T^{\rm gr}(\vx_0;f)). 
\end{array}
\end{equation*}
Following the terminology in \cite{su2016differential}, we get into the details of the ODE model \eqref{low-c} with the gradient restart scheme \eqref{grt}. Let $E_0=0$ and $\vr_0=\vx_0$, we recursively define two sequences
\begin{equation}
\label{def: Tixi}
\begin{array}{l}
     E_{i+1}=T^{\rm gr}(\vr_i;f)
\quad\mbox{and}\quad
\vr_{i+1}=\vY_{i+1}(E_{i+1}), 
\end{array}
\end{equation}
where $\vY_{i+1}(t) \mbox{ solves \eqref{low-c} with } \vx_0=\vr_i$.
Here, the sequence $\{E_i\}$ denotes the length of the restart interval, and the sequence $\{\vr_i\}$ denotes the restart point. Then, by denoting $\tau_i:=\sum_{j=0}^{i} E_j$, $i\ge 0$, the gradient restart scheme satisfies the following piecewise ODE:
\begin{equation}
\label{ODE3}
\left\{
\begin{array}{l}
     \ddot{\vX}(t)+\frac{3}{t-\tau_i}\dot{\vX}(t)+\nabla{f(\vX(t))}=0, \quad\mbox{for}\quad t\in\left( \tau_i, \tau_{i+1}  \right], \\[2mm]
     \vX(\tau_i)=\vr_i,\quad \dot{\vX}(\tau_i)=0
\end{array}
\right.
\quad \mbox{for } i=0,1,2,\cdots.
\end{equation}
Let $\vX^{\rm gr}(t)$ be the solution to \eqref{ODE3}.
The following results hold.
\begin{description}
\item[(a)] $\vX^{\mathrm{gr}}(t)$ is continuous for $t>0$, with $\vX^{\mathrm{gr}}(0)=\vx_0$.
\item[(b)] $\vX^{\mathrm{gr}}(t)$ satisfies the ODE \eqref{low-c} for $0<t<E_1:=T^{\rm gr}(\vx_0;f)$.
\item[(c)]  $\vr_{i}=\vX^{\rm gr}(\tau_i)$
and $E_{i+1}=T^{\rm gr}(\vr_{i};f)$
for $i\geq 1$.
\item[(d)]
$\vY_{i+1}(t)=\vX^{\rm gr}(\tau_i+t)$ satisfies the ODE~\eqref{low-c} with $\vY_{i+1}(0)=\vr_{i}$ for $0<t\leq E_{i+1}$.
\end{description}

\subsection{Breaking the limitation by gradient restarting}
In this subsection, we show the solution of \eqref{q_eq} has the global R-linear convergence in terms of $f(\vX^{\rm gr}(t))$, which breaks the limitation \eqref{eq:suupbound} of the continuous model \eqref{low-c} for the original APG under the same setting. 

Recall that the speed restart scheme \eqref{restart:speed}, introduced in \cite[Section 5.1]{su2016differential}, defines the restart time by
\begin{equation*}
%\label{sr time}
\begin{array}{c}
     T^{\rm sr}\left(\vx_{0} ; f\right)
:=\sup \left\{t>0\mid \frac{\mathrm{d}\|\dot{\vX}(u)\|^{2}}{\mathrm{~d} u}>0  \  \forall u \in(0, t)\right\}. 
\end{array}
\end{equation*}
According to \eqref{eq:grsr}, one has
$T^{\rm gr}(\vx_0;f)\geq T^{\rm sr}(\vx_0;f)$.
In \cite{su2016differential}, the authors established that when $f$ is fixed,
the speed restart interval $T^{\rm sr}(\vx_0;f)$ has uniform upper and lower bounds, which are independent of the initial point $\vx_0$. This result is critical for proving the global R-linear convergence of the speed restarted method.
By \cite[Lemma 25]{su2016differential}, we know
\begin{equation}
    \label{restart-time:lower bound}
    \begin{array}{c}
         T^{\rm gr}(\vx_0;f)\geq T^{\rm sr}(\vx_0;f)\geq \frac{4}{5\sqrt{L}}.
    \end{array}
\end{equation}
Thus, our main effort here is to prove the uniformly upper bound for $T^{\rm gr}(\vx_0;f)$ with given $f$ to establish the linear convergence. 
We first have the following result. 
\begin{proposition}
\label{thm: Ft}
Assume $f$ is defined as in \eqref{obj: diag}, and let $X_i(t)$, for $i\in [n]$, be the solution of \eqref{q_eq}. There exists a positive constant $T$ such that, for any initial point $\vx_0$, a time point $t_{\vx_0}$ within the interval $\left(0,T\right]$ exists and fulfills the condition that 
$\langle\nabla f(\vX(t_{\vx_0})), \dot{\vX}(t_{\vx_0})\rangle\ge 0$. 
\end{proposition}

The proof of \Cref{thm: Ft} is technical and long, so we postpone it to the next subsection.
Based on \Cref{thm: Ft}, it is easy to obtain the following result.

\begin{corollary}\label{coro: quad}
Assume $f=\frac{1}{2}\langle \vx, \vA\vx\rangle +\langle \vb, \vx\rangle$ with a symmetric positive semi-definite $\vA$ and $\vb\in\operatorname{Range}(\vA)$, and let $X(t)$ be the solution of \eqref{low-c}. Then there exists a positive constant $T$ such that, for any initial point $\vx_0$, a time point $t_{\vx_0}$ within the interval $\left(0,T\right]$ exists and fulfills the condition:
$$
\begin{array}{c}
     \langle\nabla f(\vX(t_{\vx_0})), \dot{\vX}(t_{\vx_0})\rangle\ge 0. 
\end{array}
$$
\end{corollary}

\begin{proof}
The assumption of the function $f$ indicates that there exists a minimizer $\vx^\star\in\RR^n$ of $f$.
Let $\vA=\vQ^T\boldsymbol \Lambda \vQ$  be the spectral decomposition of $\vA$ where $\vQ$ is orthonormal and $\boldsymbol \Lambda = \mathrm{diag}(\lambda_1,\lambda_2,\cdots,\lambda_n)$ containing all the eigenvalues of $\vA$. Define $\hat f : \mathbb{R}^n\to \mathbb{R}$ as $\hat{f}(\vy):= \frac{1}{2}\langle \vy,\boldsymbol\Lambda \vy\rangle$, it is easy to know that $\vX(t)$ solves \eqref{low-c} with $\vx_0$ and $f$ is equivalent to $\vY(t):=\vQ(\vX(t)-\vx^\star)$ solves \eqref{low-c} with $\vy_0:=\vQ(\vx_0-\vx^\star)$ and $\hat f$. Moreover, we have
$$
\begin{array}{l}
     \left\langle \nabla f(\vX(t)), \dot \vX(t)\right\rangle=\langle \boldsymbol
 \Lambda \vQ(\vX(t)-\vx^\star), \vQ\dot \vX(t)\rangle
   =\left\langle \nabla \hat{f}(\vY(t)), \dot \vY(t)\right\rangle.
\end{array}
$$
Note that $\hat{f}(\vy)=\sum_{i=1}^n \lambda_i y_i^2=\sum_{i\in\mathcal{I}}^n \lambda_i y_i^2$ with $\mathcal{I}:=\{i\in[n] \mid \lambda_i>0\}$. Then, without loss of generality, we assume $\lambda_i>0. i\in[n]$. Thus, according to \Cref{thm: Ft}, the restart time of $\vX(t)$ is upper bounded by $T$ since $T^{\rm gr}(\vx_0;f)=T^{\rm gr}(\vQ(\vx_0-\vx^\star);\hat f)\leq T$. 
\end{proof}
\color{black}

From the above corollary, we can get the following result regarding the convergence rate of $f(X^{\rm gr}(t))$, which constitutes the main result of this section. 
It is worth noting that the solution trajectory of \eqref{low-c} does not possess this property, as shown in \eqref{eq:suupbound}.

\begin{theorem}
\label{thm: linear quad}
Assume $f=\frac{1}{2}\langle \vx, \vA\vx\rangle +\langle \vb, \vx\rangle$ with a symmetric positive semi-definite $\vA$ and $\vb\in\operatorname{Range}(\vA)$, and let $\vX(t)$ be the solution of \eqref{low-c}.
Then, the function value $f(\vX^{\rm gr}(t))$ and the trajectory $\vX^{\rm gr}(t)$ converges to $f^\star$ and $\vx^\star$, respectively, at a globally R-linear rate of convergence.
\end{theorem}

\begin{proof}[The proof of \Cref{thm: linear quad}]
Let $\vX(t)$ be the solution of \eqref{low-c} with initial value $\vx_0$. According to \cite[Lemma 12]{su2016differential}, for any given $\vx_0$, there exists a universal constant $C\in (0,\frac{3}{25})$ such that
\begin{equation*}
\begin{array}{l}
     f\left(\vX(T^{\rm sr}(\vx_0;f))\right)-f^{\star} \leq\left(1-\frac{C \mu}{L}\right)\left(f\left(\vx_{0}\right)-f^{\star}\right). 
\end{array}
\end{equation*}
Since $\frac{\mathrm{d}f(\vX(t))}{\mathrm{d}t}\leq 0$ before the gradient restart time
and  $T^{\rm gr}(\vx_0;f)\geq T^{\rm sr}(\vx_0;f)$ by \eqref{eq:grsr}, one has that from the same initial point $\vx_0$,
\begin{equation}\label{lbdescent}
\begin{array}{l}
     f(\vX(T^{\rm gr}(\vx_0;f)))-f^{\star}\leq f(\vX(T^{\rm sr}(\vx_0;f)))-f^{\star} \leq\left(1-\frac{C \mu}{L}\right)\left(f\left(\vx_{0}\right)-f^{\star}\right). 
\end{array}
\end{equation}
Let $m$ denote the total number of gradient restarts before time $t$, we have $t\geq\tau_m$ and $m \geq \lfloor{t/T}\rfloor \geq t/T-1$ according to \Cref{coro: quad}. Note that
$$
\begin{array}{l}
     \vX^{\mathrm{gr}}(\tau_{i+1})=\vX^{\mathrm{gr}}(\tau_{i}+E_{i+1})=\vY_{i+1}(E_{i+1})=\vY_{i+1}(T^{\rm gr}(\vr_i;f)) \quad
\mbox{for } \quad i=0,1,\ldots,m-1,
\end{array}
$$
where $\tau_{i},E_{i},\vY_{i}$ and $\vr_i$ are defined in \eqref{def: Tixi}. Thus, the inequality \eqref{lbdescent} implies that
$$
\begin{array}{rl}
f\left( \vX^{\mathrm{gr}}(\tau_{i+1})\right)-f^\star
&=f\left( \vY_{i+1}(T^{\rm gr}(\vr_i;f))\right)-f^\star\\
&\leq \left(1-\frac{C \mu}{L}\right)\left(f\left(\vr_i\right)-f^{\star}\right)
=\left(1-\frac{C \mu}{L}\right)\left(f\left(\vX^{\mathrm{gr}}(\tau_{i})\right)-f^{\star}\right),
\end{array}
$$
since $\vY_{i+1}$ solves \eqref{low-c} with initial value $\vr_i$.
Therefore, we obtain the linear convergence by
\begin{equation*}
	    \begin{array}{rl}
	    	f\left(\vX^{\mathrm{gr}}(t)\right)-f^{\star} & \leq f\left(\vX^{\mathrm{gr}}\left(\tau_m\right)\right)-f^{\star}
	    	 \leq \big(1-\frac{C\mu}{L}\big)(f\left(\vX^{\rm gr}\left(\tau_{m-1}\right)\right)-f^{\star}) 
             \\[1mm]
	    	& \leq \big(1-\frac{C\mu}{L}\big)^m(f(\vx_0)-f^{\star}) \leq e^{-C\mu m/L}(f(\vx_0)-f^{\star})
            \\[1mm]
	    	& \leq  c_1e^{-C\mu t/LT}\frac{L\|\vx_0-\vx^\star\|^2}{2} =\frac{c_1 L\|\vx_0-\vx^\star\|^2}{2}e^{-c_2t},
	    \end{array}
	\end{equation*}
where $c_1 = e^{C\mu/L}$ and $c_2 = C\mu/LT$. The proof is completed with the strong convexity of $f$, which implies $f\left(\vX^{\mathrm{gr}}(t)\right)-f^{\star}\ge\mu\|\vX^{\rm gr}(t)-\vx^\star\|^2/2$.
\end{proof}

% {\color{magenta}
% [Rewrite to a concluding paragraph or remark.]
% This improvement from sublinear to linear convergence validates the advantages of the gradient restarting.
% Moreover, the result~\eqref{convgence-linear} partially addresses the open question of whether there exists a provable linear convergence rate for the gradient restart scheme \eqref{eq:gradrest} when applied to strongly convex problems \cite[pp. 27]{su2016differential}.Building upon these results, this work provides a theoretical understanding of the inherent properties of the gradient restarting within the accelerated gradient methods, which are consistently observed in numerical experiments.}

\begin{remark}
    \Cref{thm: linear quad} demonstrates that the gradient restarted ODE \eqref{ODE3} improves the convergence rate from the sublinear rate \eqref{eq:suupbound} of the original ODE \eqref{low-c} to linear when minimizing a quadratic and convex function. This enhancement theoretically validates the advantages of the gradient restarting. 
\end{remark}

%and partially addresses the open question of whether there exists a provable linear convergence rate for the gradient restart scheme \eqref{eq:gradrest} when applied to strongly convex problems \cite[pp. 27]{su2016differential}.

\subsection{Proof of Proposition \ref{thm: Ft}}
Let $\vX(t)$ be the solution of \eqref{q_eq}. To prove Proposition \ref{thm: Ft}, we need to consider the property of the function $S(t):\RR_+\rightarrow\RR$ defined by
\begin{equation}\label{Gt}
\begin{array}{l}
     S(t):=-\frac{\pi t^3}{4}\langle\nabla f(\vX(t)), \dot{\vX}(t)\rangle=-\frac{\pi t^3}{4}\sum_{i=1}^{n} \lambda_{i} X_{i}(t) \dot{X}_{i}(t), \ t>0. 
\end{array}
\end{equation}
Note that $\langle\nabla f(\vX(t)), \dot{\vX}(t)\rangle\ge0$ is equivalent to $S(t)\le 0$. Then the inequality~\eqref{restart-time:lower bound} yields that $S(t)>0$ for all $t\in(0,4/5\sqrt{L})$. Next, using the Bessel function, we can get the explicit expression of $S(t)$ by the following lemma.
\begin{lemma}%\label{lemma_bes}
Let $X_i$ to be the solution of \eqref{q_eq} for all $i\in[n]$.
One has
% $$
% \begin{array}{l}
%      H(t) = \sum_{i=1}^n H_i(t)
% \quad\mbox{with}\quad
% H_i(t)=-\frac{4\sqrt{\lambda_{i}}x^2_{0,i}}{t^2}J_1(\sqrt{\lambda_{i}}t)J_2(\sqrt{\lambda_{i}}t), \quad i\in[n],
% \end{array}
% $$
\begin{equation}\label{g}
\begin{array}{l}
     S(t)=\sum^{n}_{i=1}x^2_{0,i}G(\sqrt{\lambda_{i}}t)
\quad\mbox{with}\quad
G(u)=\pi u J_1(u)J_2(u),
\end{array}
\end{equation}
where $J_{q}(u)=\sum_{p=0}^{\infty} \frac{(-1)^{p}}{(p)!(p+q)!} \left(\frac{u}{2}\right)^{2 p+q}$ is Bessel function of the first kind with order $q$.
\end{lemma}
\begin{proof}
According to \cite[ Section 3.2]{su2016differential}, the solution to \eqref{q_eq} is given by
\begin{equation}\label{X_i}
\begin{array}{l}
     X_{i}(t)=\frac{2 x_{0, i}}{t \sqrt{\lambda_{i}}} J_{1}\left( \sqrt{\lambda_{i}}t\right), 
\end{array}
\end{equation}
where $J_1$ is the Bessel function of the first kind of order one.
Utilizing the iterative formula for the Bessel function \cite{watson1995treatise},
\begin{equation*}
\begin{array}{l}
     \frac{\mathrm{d}}{\mathrm{d}u}\left[u^{-q} J_{q}(u)\right]=-u^{-q} J_{q+1}(u), 
\end{array}
\end{equation*}
and combining it with \eqref{X_i}, we derive
\begin{equation*}
\begin{array}{l}
     \dot{X}_i(t)=-\frac{2x_{0,i}}{t}J_2(\sqrt{\lambda_{i}}t). 
\end{array}
\end{equation*}
The proof is completed with
$S(t)=-\frac{\pi t^3}{4}\sum_{i=1}^{n} \lambda_{i} X_{i}(t) \dot{X}_{i}(t)$.
\end{proof}

% Next, we define the function
% \begin{equation}\label{g}
% G(u)=\pi u J_1(u)J_2(u),
% \end{equation}
% so that, by \Cref{lemma_bes}, $H(t)$ can be expressed as
% \begin{equation}\label{F}
% \begin{array}{l}
%      H(t)=
% -\sum_{i=1}^n
% \frac{4\sqrt{\lambda_{i}}x^2_{0,i}}{t^2}J_1(\sqrt{\lambda_{i}}t)J_2(\sqrt{\lambda_{i}}t)
% =
% -\sum\limits^{n}_{i=1}\frac{4x^2_{0,i}}{\pi t^3}G(\sqrt{\lambda_{i}}t).
% \end{array}
% \end{equation}
Then we characterize the asymptotic behavior of $G(u)$.
\begin{lemma}
\label{lemma_as}
Let $G$ be defined in \eqref{g}. Given an $\epsilon>0$, there exists a constant $T_{\epsilon}>0$ such that
$$
\begin{array}{l}
     |G(u)-\cos(2u)|\leq \epsilon,\quad\forall u>T_\epsilon. 
\end{array}
 $$
\end{lemma}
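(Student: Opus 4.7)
The plan is to apply the classical large-argument asymptotic expansion of Bessel functions and reduce $G(u)=\pi u J_1(u)J_2(u)$ to the elementary trigonometric expression $\cos(2u)$ up to vanishing error. Concretely, I will use the standard expansion
\[
J_q(u)=\sqrt{\frac{2}{\pi u}}\cos\!\left(u-\frac{q\pi}{2}-\frac{\pi}{4}\right)+R_q(u),\qquad |R_q(u)|\le C_q\,u^{-3/2}\quad\text{for }u\ge 1,
\]
valid for fixed integer order $q$. The specific angles that appear for $q=1,2$ are $u-3\pi/4$ and $u-5\pi/4$, and these are the only ingredients needed.

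First I would substitute these two expansions into the product $J_1(u)J_2(u)$, obtaining a leading quadratic term plus three cross/remainder terms. The leading term is
\[
\frac{2}{\pi u}\cos\!\left(u-\tfrac{3\pi}{4}\right)\cos\!\left(u-\tfrac{5\pi}{4}\right),
\]
which I collapse with the product-to-sum identity: the difference of the two phases is $\pi/2$ and their sum is $2u-2\pi$, so the bracket equals $\tfrac12[\cos(\pi/2)+\cos(2u-2\pi)]=\tfrac12\cos(2u)$. Multiplying by $\pi u$ from the definition of $G$ turns this leading piece into exactly $\cos(2u)$.

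Next I will bound the three remaining contributions to $G(u)-\cos(2u)$. Each "cross" term has the shape $\pi u\cdot\sqrt{2/(\pi u)}\cdot(\text{bounded cosine})\cdot R_j(u)$, which is $O(\sqrt{u}\cdot u^{-3/2})=O(u^{-1})$; the "remainder$\times$remainder" term is $\pi u\cdot O(u^{-3})=O(u^{-2})$. Hence there exists an absolute constant $C>0$ such that $|G(u)-\cos(2u)|\le C/u$ for all $u\ge 1$, and choosing $T_\epsilon:=\max\{1,C/\epsilon\}$ completes the proof.

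I do not expect a genuine obstacle here; the only care needed is to cite (or briefly verify) the uniform remainder bound $|R_q(u)|\le C_q u^{-3/2}$ for $q=1,2$ on $u\ge 1$, which is standard and can be referenced from Watson's treatise already cited in the paper. The algebraic cancellation of the two Bessel phases into $\cos(2u)$ is the substantive content of the lemma, and it is immediate from the product-to-sum identity once the asymptotics are in place.
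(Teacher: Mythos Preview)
Your proposal is correct and follows essentially the same approach as the paper's own proof: both use the large-argument asymptotic expansion $J_q(u)=\sqrt{2/(\pi u)}\cos(u-\tfrac{q\pi}{2}-\tfrac{\pi}{4})+o(u^{-1/2})$ and reduce the product of the two cosines to $\tfrac{1}{\pi u}\cos(2u)$ via a product-to-sum identity. The only difference is cosmetic---you track an explicit $O(u^{-3/2})$ remainder to obtain the quantitative bound $|G(u)-\cos(2u)|\le C/u$, whereas the paper is content with $o(1)$ and the qualitative conclusion $\lim_{u\to\infty}|G(u)-\cos(2u)|=0$.
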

\begin{proof}
It follows from \cite{watson1995treatise} that the asymptotic expansions of
$J_{q}(u)$ is given by
$$
\begin{array}{l}
     J_{q}(u) =\sqrt{\frac{2}{\pi u}} \cos \left(u-\frac{\pi}{4}-\frac{q \pi}{2}\right)+o\left(\frac{1}{\sqrt{u}}\right).
\end{array}
$$
Therefore, we can get
\begin{equation*}
\begin{array}{rl}
J_1(u)J_2(u) & =\frac{2}{\pi u }\cos\left(u-\frac{\pi}{4}-\frac{\pi}{2}\right)\cos\left(u-\frac{\pi}{4}-\pi\right) + o\left(\frac{1}{u}\right) \\[1mm]
& =-\frac{1}{\pi u }\sin(2u-\frac{\pi}{2}) + o\left(\frac{1}{u}\right) =\frac{1}{\pi u }\cos(2u) + o\left(\frac{1}{u}\right).
\end{array}
\end{equation*}
Therefore, one gets
$\lim\limits_{u\to\infty}|G(u)-\cos(2u)|=0$, which completes the proof.
\end{proof}

Lemma \ref{lemma_as} describes the approximation of $G(u)$ by the cosine function. Therefore, the remaining issue is determining when the weighted sum of the cosine functions becomes nonpositive. We claim the following lemmas. 
\begin{lemma}[Dirichlet's Theorem on Simultaneous Approximation, \cite{schmidt1980diophantine} Chapter 2, Theorem 1A]\label{thm: diri}
    Suppose that $\alpha_1, \ldots, \alpha_n$ are $n$ real numbers and that $Q>1$ is an integer. Then there exist integers $q, p_1, \ldots, p_n$ such that
$$
\begin{array}{l}
     1 \leq q<Q^n \quad \text { and } \quad\left|\alpha_i q-p_i\right| \leq 1/Q \quad \mbox{for all} \quad i\in[n].
\end{array}
$$
\end{lemma}
\begin{lemma}\label{lemma:epin}
    Define $\Omega_1^n:=[-\frac{T_1}{6},\frac{T_1}{6}]\times \ldots\times [-\frac{T_n}{6},\frac{T_n}{6}]$ and assume $\{\lambda_i\}_{i=1}^n$ be a set of positive real numbers. Then there exist positive constants $\epsilon_n$ and $\bar t_{max}$ such that, for any point $\vx \neq 0$ and $\omega:=[\omega_1,\ldots,\omega_n]\in\Omega_1^n$, a time $\bar t$ within the interval $(0, \bar t_{max}]$ exists and fulfills the condition:
        $$
        \begin{array}{l}
             \sum_{i=1}^{n} x^2_{i} \cos \left(2 \sqrt{\lambda_{i}}( \bar t+\omega_i)\right)\le -\|\vx\|^2\epsilon_n. 
        \end{array}
    $$

\end{lemma}

\begin{proof}%[Proof of \Cref{lemma:epin}] 
The results trivially hold when $n=1$, thus we only need to figure out the scenario $n\ge2$. 
Given $\vx\in\mathbb{S}^{n-1}:=\{\vx\in\RR^n \mid \|\vx\|_2=1\}$ and $\omega\in\Omega_1^n$, we first claim that 
\begin{equation}\label{claim}
    \begin{array}{l}
         \mbox{there exists a positive number}\quad t_{\vx}^{\omega} \quad\mbox{such that}\quad\sum_{i=1}^{n} x^2_{i} \cos \left(2 \sqrt{\lambda_{i}}( t_{\vx}^{\omega}+\omega_i)\right)<0.
    \end{array}
\end{equation}
Since $\cos \left(2 \sqrt{\lambda_{i}}( t+\omega_i)\right)$ is continuous w.r.t $t$ and $\cos \left(2 \sqrt{\lambda_{i}}( \omega_i)\right)>0$ for all $i\in[n]$, then there exists a $T_{low}\in(0,T_1)$ such that 
\begin{equation*}
\begin{array}{l}
     \delta:=\int_{0}^{T_{low}} \sum_{i=1}^{n} x^2_{i} \cos \left(2 \sqrt{\lambda_{i}}( t_{\vx}^{\omega}+\omega_i)\right) \mathrm{d} t>0.
\end{array}
\end{equation*}
Assume $T_{max}=\max_{i\in[n]} T_i$ and apply \Cref{thm: diri} to the positive integer $Q:=\left\lfloor T_{max} /{\delta  }\right\rfloor+1$ and $n-1$ real numbers $T_1/ T_2,\ldots T_1/T_n$, then there exist integers $q, p_1, \ldots, p_n$ such that 
$$
\begin{array}{l}
     1 \leq q<Q^{n-1} \quad \text { and } \quad\left|(T_1q)/{T_i} -p_i\right| \leq 1/{Q} \quad \mbox{for all } i\in[n-1]+1.
\end{array}
$$
It yields $|q T_1 -p_i T_i|\le T_i/{Q}$ for all $i\in[n-1]+1$.
Let $T_{upp}:=qT_1$, then we have 
\begin{equation*}
    \begin{array}{rl}
    \int_{0}^{T_{upp}} \sum_{i=1}^{n} x^2_{i} \cos \left(2 \sqrt{\lambda_{i}}( t+\omega_i)\right) \mathrm{d} t 
    &\le \|\vx\|_2^2 \max_{i\in[n]} \int_{0}^{qT_1}\left|\cos \left(2 \sqrt{\lambda_{i}} (t+\omega_i)\right)\right|\mathrm{d} t \\[1mm]
    &\le  \max_{i\in[n-1]+1} \int_{0}^{|q T_1 -p_i T_i|}\left|\cos \left(2 \sqrt{\lambda_{i}} (t+\omega_i)\right)\right|\mathrm{d} t \\[1mm]
    &\le \max_{i\in[n-1]+1}|q T_1 -p_i T_i|\le T_{max}/{Q}<\delta,
\end{array}
\end{equation*}
where the first inequality above comes from $\|\vx\|_2^2=1$ and $|\cos(\cdot)|\le1$ and the third inequality is from $|\cos(\cdot)|\le1$.
Therefore, the claim \eqref{claim} is proved by
$$
\begin{array}{l}
     \int_{T_{low}}^{T_{upp}} \sum_{i=1}^{n} x^2_{i} \cos \left(2 \sqrt{\lambda_{i}}( t+\omega_i)\right) \mathrm{d} t<\delta-\delta= 0,
\end{array}
$$
and $T_{upp}=qT_1\ge T_1>T_{low}$.

Next, we eliminate the dependence of $t$ on $\vx_0$ with the Heine–Borel theorem. Define the smooth function $\varphi(\vx,\omega,t):\RR^n\times\RR^n\times\RR_{+}\rightarrow\RR$ by
\begin{equation*}
\begin{array}{l}
     \varphi(\vx,\omega,t):=\sum_{i=1}^{n} x_i^2\cos \left(2 \sqrt{\lambda_{i}} (t+\omega_i)\right), 
\end{array}
\end{equation*}
where $\omega=[\omega_1,\omega_2,\ldots,\omega_3]$. The claim \eqref{claim} and the continuity of $\varphi$ with respect to $\vx$ and $\omega$ implies that, there exists an open ball ${\mathbb B}_{\delta_{\vx}^{\omega}}(\vx,\omega) := \{(\vx',\omega')\in\RR^{2n} \mid \|\vx-\vx'\|_2^2+\|\omega-\omega'\|_2^2<\delta_{\vx}^{\omega}\}$ such that $\varphi(\vx',\omega', t_{\vx}^{\omega})<\frac{1}{2}\varphi(\vx,\omega, t_{\vx}^{\omega})<0$ for all $(x',\omega')\in {\mathbb B}_{\delta_{\vx}^{\omega}}(\vx,\omega)$.

% $\delta_{\vx}^{\omega}$ such that $\varphi(\vx',\omega', t_{\vx}^{\omega})<\frac{1}{2}\varphi(\vx,\omega, t_{\vx}^{\omega})<0$ for all $(x',\omega')\in {\mathbb B}(\vx,\delta_{\vx}^{\omega})\times {\mathbb B}(\omega,\delta_{\vx}^{\omega})$. 

% an open ball ${\mathbb B}(\vx,\delta_{\vx}) = \{\vx'\in\RR^n|\|\vx-\vx'\|_2^2+\|\vx-\vx'\|_2^2<\delta_{\vx}\}$ such that 

Due to the compactness of $\mathbb{S}^{n-1}\times \Omega_1^n$ and  ${\mathbb S}^{n-1}\times \Omega_1^n \subset \cup_{\vx\in {\mathbb S}^{n-1}\times \Omega_1^n}{\mathbb B}_{\delta_{\vx}^{\omega}}(\vx,\omega)$, there exists a finite set $\{(\vx^{(j)},\omega^{(j)})\}_{j=1}^M\subset {\mathbb S}^{n-1}\times \Omega_1^n$ such that $\cup_{j=1}^M {\mathbb B}_{\delta^{(j)}}(\vx^{(j)},\omega^{(j)})\supset {\mathbb S}^{n-1}\times \Omega_1^n$ with $\delta^{(j)}=\delta_{\vx^{(j)}}^{\omega^{(j)}}$.
Define
$$
\begin{array}{l}
     \epsilon_n := -\frac{1}{2}\max_{j\in[M]}\{\varphi(\vx^{(j)},\omega^{(j)},\bar{t}_j)\}>0
\quad\mbox{and}\quad
\mathbb{M} :=\{\bar t_j\}_{j=0}^M \quad\mbox{with}\quad \bar t_j=t_{\vx^{(j)}}^{\omega^{(j)}} \mbox{ obtained from }\eqref{claim}.
\end{array}
$$
Note that $\frac{\vx}{\|\vx\|}\in {\mathbb S}^{n-1}$ for the given $\vx\neq 0$. Thus, there exists some index $i_0\in[M]$
such that $\left(\frac{\vx}{\|\vx\|},\omega \right)\in  {\mathbb B}_{\delta^{(i_0)}}(\vx^{(i_0)},\omega^{(i_0)})$ and $\bar{t}_{i_0}\in\mathbb{M}$.
Thus, we have
\begin{equation*}
\begin{array}{l}
     \varphi(\vx,\omega,\bar{t}_{i_0})= \|\vx\|^2\varphi\left(\frac{\vx}{\|\vx\|},\omega,{t}_{i_0}\right)
<\frac{\|\vx\|^2}{2} \varphi(\vx^{(i_0)},\omega^{(i_0)},{t}_{i_0})
\leq -\|\vx\|^2\epsilon_n, 
\end{array}
\end{equation*}
which completes the proof with $\bar t_{max}:=\max_{j\in[M]}\bar t_j$.
\end{proof}

Note that the numbers $\epsilon_n$ and $\bar t_{max}$ in Lemma~\ref{lemma:epin} are independent of the $\vx_0$. Combining the Lemma~\ref{lemma_as}, Proposition~\ref{thm: diri} and Lemma~\ref{lemma:epin}, we establish the proof of the  Proposition~\ref{thm: Ft}.

\begin{proof}[Proof of \Cref{thm: Ft}]
The results trivially hold when $\vx_0=0$. Assume that $\vx_0\neq 0$,  applying \cref{lemma_as} to the constant $\epsilon_n$ obtained in \Cref{lemma:epin}, there exists a constant $\hat T>0$ such that
\begin{equation}
    \label{approx}
    \begin{array}{l}
         |G(\sqrt{\lambda_i}t)-\cos(2\sqrt{\lambda_i}t)|\leq  \epsilon_n, \quad\forall t>\hat T,
\quad
\forall i\in[n].
    \end{array}
\end{equation}
Applying \Cref{thm: diri} to the positive integer $Q:=6\lfloor \hat T/T_1\rfloor+6$ and $n-1$ real numbers $T_1/ T_2,\ldots T_1/T_n$, then there exist integers $q, p_1, \ldots, p_n$ such that 
$$
\begin{array}{l}
     1 \leq q<Q^{n-1} \quad \text { and } \quad\left|(T_1q)/T_i-p_i\right| \leq 1/Q \quad \mbox{for all } i\in[n-1]+1. 
\end{array}
$$
Equivalently, we have, for all $i\in[n-1]+1$, 
$$
\begin{array}{l}
     |N_1T_1-N_iT_i|\le \frac{T_i}{6} \quad \mbox{with} \quad N_1=q \left\lfloor \frac{\hat T}{T_1}\right\rfloor+q, N_i=p_i \left\lfloor \frac{\hat T}{T_i}\right\rfloor+q 
\end{array}
$$
and $N_1T_1>\hat T$. Fixed $\vx_0$ and let $\omega_i:=N_1T_1-N_iT_i\in[-T_i/6,T_i/6], i\in[n]$, we have
$$
\begin{array}{l}
     \sum_{i=1}^{n} x_{0,i}^2\cos \left(2 \sqrt{\lambda_{i}} (N_1T_1)\right)=\sum_{i=1}^{n} x_{0,i}^2\cos \left(2 \sqrt{\lambda_{i}} (N_iT_i+\omega_i)\right)=\sum_{i=1}^{n} x_{0,i}^2\cos \left(2 \sqrt{\lambda_{i}} \omega_i\right)
\end{array}
$$
Applying Lemma~\ref{lemma:epin} with this $(\vx_0,\omega)$, there exists a time $\bar t\in(0,\bar t_{max}]$ such that
 \begin{equation}
\label{eqn:hat-G-case-I-new}
        \begin{array}{l}
             \sum_{i=1}^{n} x^2_{i} \cos \left(2 \sqrt{\lambda_{i}}( \bar t+\omega_i)\right)\le -\|\vx\|^2\epsilon_n, %\quad\mbox{and}\quad \bar t\leq \bar t_{max},
        \end{array}
 \end{equation}
where $\bar t_{max}$ obtained in \Cref{lemma:epin} is independent with $\vx_0$.
Take $t_{\vx_0} :=  N_1T_1 + \bar t$, then we have $\hat T<N_1T_1\le t_{\vx_0}\le  N_1T_1+\bar t_{max}$ and $t_{\vx_0}=N_iT_i+\omega_i+\bar t$, then \eqref{eqn:hat-G-case-I-new} and \eqref{approx} imply
$$
    \begin{array}{rl}
           S(t_{\vx_0})  & \le \sum_{i=1}^n x_{0,i}^2 \cos(2\sqrt{\lambda_i}(N_iT_i+\omega_i+\bar t) + \sum_{i=1}^n x_{0,i}^2 |G(\sqrt{\lambda_i}t_{\vx_0}) - \cos(2\sqrt{\lambda_i}t_{\vx_0})| \\
           & \leq -\|\vx_0\|^2\epsilon_n +  \|\vx_0\|^2\epsilon_n = 0.
    \end{array}
$$
The proof is completed with \eqref{Gt} and $ T:=N_1T_1+\bar t_{max}$,
where $N_1,T_1$ and $\bar t_{max}$ are independent with $\vx_0$.
\end{proof}

\section{Conclusions}
\label{sec:con}
Restart techniques are frequently employed in accelerated gradient methods and have demonstrated promising numerical performance across various problem domains. This paper focuses on the mathematical analysis of the gradient restarted APG method, a widely used adaptive restart approach. We first establish the global linear convergence of both the original and gradient restarted APG methods for solving strongly convex composite optimization problems.
Moreover, to highlight the benefits of the gradient restart scheme, we prove the global linear convergence for the trajectory of the ODE model for both quadratic and convex objective functions. This is a property that was shown to be absent in \cite{su2016differential}.
Building upon these results, this work provides a theoretical understanding of the inherent properties of the gradient restarting within the accelerated gradient methods, which are consistently observed in numerical experiments.
While our results partially address the open question presented in \cite{su2016differential}, it is of interest to fully resolve this issue for the general strongly convex cases.

%%%%%%%%%%%%%%%%%%%%%%%%%%%%%%%%%%%%%%%%%%%%%%%%%%%%%%%
%%% Appendix sections. žœÂŒÕÂœÚ, ·Ç±ØÑ¡
%%%%%%%%%%%%%%%%%%%%%%%%%%%%%%%%%%%%%%%%%%%%%%%%%%%%%%%
\begin{appendix}

%\appendix
\section{A refined analysis for the smooth scenario}
%\label{sec:nagc}

This appendix considers the scenario where $g$ is vacuous and the APG method reduces to the iteration \eqref{NAG}. 
Even the results in \Cref{sec: discrete} (\Cref{thm: APG}, \Cref{thm: apgseq}, \Cref{thm: reapgc}, and \Cref{coro: bouned reapg}) apply to this smooth scenario, further using the fact that $g\equiv 0$ can obtain a finer convergence rate analysis with or without gradient restart. 
For convenience, we make the following assumption.
\begin{assumption}
\label{ass:smooth}
Assume that $f\in\mathcal S_{\mu, L}$, $g\equiv 0$ with $L>\mu>0$, and $s\in(0,1/L)$. Moreover,  $\vx^\star$ is the unique minimizer of $f$ and $f^\star = f(\vx^\star)$.
\end{assumption}

Using the Lyapunov function in \eqref{lyap: APG}, we refine the upper bound \eqref{upper bound: lyapk+1 apg} in the smooth scenario as the following lemma.

\begin{lemma}
\label{lemma: upp nag}
Under Assumption \ref{ass:smooth}, let $\{\vx_k\}$ and $\{\vy_k\}$ be the sequences generated by iteration \eqref{NAG} with step size $s\in(0,1/L)$ and $\{\cE_k\}$ be the Lyapunov sequence defined by \eqref{lyap: APG}. 
Then, it holds that
\begin{equation}
\label{upper bound: lyapk}
\begin{array}{l}
\cE_k\leq  \frac{ s(t_{k+1}-1)t_{k+1}(1+\mu/a)}{2\mu} \left\|\nabla f(\vy_k)\right\|^2
+\frac{1+1/b}{2}\left\|\vy_{k}-{\vx^\star}\right\|^{2}
%\\
+\frac{(1+b)(t_{k+1}-1)^2+s(t_{k+1}-1)t_{k+1}(a+L)}{2}\left\| \vy_k-\vx_k\right\|^{2}
\end{array}
\end{equation}
for all $a>0,b>0$ and $k\ge0$.
\end{lemma}
\begin{proof}
    Note that the auxiliary function $G_s$ reduces to $\nabla f$ when $g$ is vacuous. Since $f\in\mathcal{S}_{\mu,L}$, we have $f(\vx)-f^\star\ge\mu/2\|\nabla f(\vx)\|^2$. Taking \eqref{ykxk} into the first part of \eqref{lyap: APG}, one has for any $a>0$ that
$$
\begin{array}{ll}
\cE_k^p
&\le
s(t_{k+1}^2-t_{k+1})\big[\nabla f(\vy_k)^{T}(\vx_k-\vy_k)+\frac{L}{2}\left\| \vx_k-\vy_k\right\|^2\big]
+\frac{ s(t_{k+1}-1)t_{k+1}}{2\mu} \left\|\nabla f(\vy_k)\right\|^2
\\[1mm]
&\le
s(t_{k+1}^2-t_{k+1})\big[\big(\frac{1}{2a}+\frac{1}{2\mu}\big)\left\|\nabla f(\vy_k)\right\|^2
+\frac{a}{2}\left\| \vy_k-\vx_k\right\|^{2}
+\frac{L}{2}\left\| \vx_k-\vy_k\right\|^2\big],
\end{array}
$$
where the last inequality comes from the Cauchy–Schwarz inequality. Similarly, the second part of \eqref{lyap: APG} satisfies
$$
\begin{array}{l}
     \cE_k^m
\le
\frac{1+b}{2}(t_{k+1}-1)^2\left\| \vy_k-\vx_k\right\|^{2}
+\frac{1+1/b}{2}\left\|\vy_{k}-{\vx^\star}\right\|^{2}
\end{array}
$$
for any $b>0$. Consequently, summing the above two inequalities together implies \eqref{upper bound: lyapk}, which completes the proof.
\end{proof}
Based on \Cref{lemma: upp nag}, we have a new global R-linear convergence of \Cref{algoapgc} with vacuous $g$.

\begin{theorem}
\label{thmnagcle}
Under \Cref{ass:smooth}, let $\{\vx_k\}$ and $\{\vy_k\}$ be the sequences generated by iteration \eqref{NAG} and $\cE_k$ be defined in \eqref{lyap: APG}. Then, there exists a positive sequence $\{\eta_k\}_{k=1}^{\infty}$ such that 
\begin{equation}
\label{linear: nagle}
\begin{array}{l}
     \cE_{k}\leq \eta_{k}\cE_{k-1},\quad \mbox{and} \quad f\left(\vx_{k}\right)-f^{\star}\leq
     \frac{\prod_{i=1}^{k}\eta_i}{2s(t_{k+1}-1)t_{k+1}}\cdot\left\|\vx_{0}-\vx^{\star}\right\|^{2}\quad \mbox{for all } k\ge1
\end{array} 
   \end{equation}
Specifically, $\{\eta_i\}_{i=0}^\infty$ is a strictly increasing sequence with $\eta _1 = 1-\mu s$ and $\bar\eta:=\lim\limits_{i\to+\infty}\eta_{i}\leq1- \frac{(1-L s)\mu s}{1+\max\{\frac{\mu}{L},\frac{1}{8}\}}$.
% \begin{equation}
%     \label{def: bareta}
%     \begin{array}{l}
%         \eta _1 = 1-\mu s, \mbox{ and }\ \bar\eta:=\lim\limits_{i\to+\infty}\eta_{i}\leq1- \frac{(1-L s)\mu s}{1+\max\{\frac{\mu}{L},\frac{1}{8}\}}<\rho
%     \end{array}
% \end{equation}
\end{theorem}

\begin{proof}
    Given $t\ge 1$ and $s\in(0,1/L)$, let $\Phi_{t}:\RR_{++}^2 \to \RR_{++}$ be the function 
    $$
    \begin{array}{l}
\Phi_t(a,b):=
\max\big\{ \underbrace{(t -1)(1+\mu/a)/[t(1-sL)]}_{\phi_{t,1}(a)},
\underbrace{(1+b)(t-1)/{t}+s(a+L)}_{\phi_{t,2}(a,b)},
\underbrace{(1+1/b)/{t}}_{\phi_{t,3}(b)}
\big\},
\end{array}
$$
%     $$
%     \begin{array}{l}
% \Phi_t(a,b):=
% \max\big\{ \underbrace{\frac{(t -1)(1+\mu/a)}{t(1-sL)}}_{\phi_{t,1}(a)},
% \underbrace{\frac{(1+b)(t-1)}{t}+s(a+L)}_{\phi_{t,2}(a,b)},
% \underbrace{\frac{1+1/b}{t}}_{\phi_{t,3}(b)}
% \big\},
% \end{array}
% $$
% with three positive functions defined by
% $$
% \begin{array}{l}
%      \phi_1(a):=\frac{(t -1)(1+\mu/a)}{t(1-sL)},
% \quad
% \phi_2(a,b):=\frac{(1+b)(t-1)}{t}+s(a+L),
% \quad
% \phi_3(b):=\frac{1+1/b}{t}.
% \end{array}
% $$
and define $ E(t):= \inf_{a>0,b>0}\Phi_{t}(a,b)$. From \Cref{lemma: upp nag} and 
\Cref{lemma: diff apg}, we know that $\cE_{k}\le \frac{E(t_{k+1})}{\mu s}(\cE_{k}-\cE_{k+1})$. Next, we work on the monotonicity of $E(t)$. Firstly, one has $E(1)=1$ when $t=1$ and $E(t)\le\Phi_t(1/s,1)<\frac{3}{1-Ls}$ when $t>1$. Given $t>1$, note that $\Phi_{t}(a,b)\ge\frac{4}{  1-Ls }$ if $(a,b)\notin [\frac{\mu}{4},4L]\times[\frac{1}{4},4]$, thus the minimum of $\Phi_{t}(a,b)$ should be attained in $\RR_{++}^2$. Moreover, according to the monotonicity properties of $\phi_{t,1}$, $\phi_{t,2}$, and $\phi_{t,3}$ with respect to $a$ and $b$, there exists a unique solution $(A(t),B(t))=\argmin_{a>0,b>0}\Phi_{t}(a,b)$, which also uniquely solve the equations $\phi_1(a)=\phi_2(a,b)=\phi_3(b)$. Note that 
$E(t)=\phi_3(B(t))=\phi_2(A(t),B(t))$ for all $t>1$, thus one has $tE(t)-1=1/B(t)$ and
$$
\begin{array}{l}
E(t)=\frac{(1+1/B(t))(t-1)}{t /B(t)}+(A(t)+L)s
=
\frac{(t-1)}{tE(t)-1} E (t)+(A(t)+L)s.
\end{array}
$$ 
This equality implies that $\frac{t-1}{tE(t)-1}<1$,
which yields $E(t)>1$ for all $t>1$. Define two functions as
\begin{equation*}
\left\{\begin{array}{cc}
\vartheta (t,a,e):= \frac{t-1}{t}(1+\frac{\mu}{a})-(1-sL)e,\\
\xi       (t,a,e):= \frac{t-1}{t}\frac{te}{te-1}+(a+L)s-e,
\end{array}
\right.
\quad
\mbox{for all } t>1, a>0, e>1.
\end{equation*}
Then $(t, A(t), E(t)$ is the solution of the equations $\vartheta (t,a,e)=
\xi(t,a,e)=0$. By direct calculations, we have
$$
\begin{array}{c}
     E'(t)=-\begin{vmatrix}
\frac{\partial (\vartheta,\xi) }{\partial (a,t)}
\end{vmatrix}\Big /\begin{vmatrix}
\frac{\partial (\vartheta,\xi) }{\partial (a,e)}
\end{vmatrix}
\end{array}>0 \quad\mbox{with}\quad
a=A(t)>0\ \mbox{and}\  e=E(t)>1.
$$
This, together with $E(1)=1$ demonstrates the strict increasing property of $E(t),t\ge1$. 
Moreover, a similar analysis shows that $a_{\infty}:=\argmin_{a>0}
\max\left\{
\frac{1+\mu/a}{1-sL},
1+s(a+L)
\right\}$ is well-defined. Therefore, we have
$$
\begin{array}{ll}
    E(t)&\le\Phi_{t}\left(a_\infty,\frac{1}{t-1}\right)
\le
\max\left\{
\frac{1+\mu/a_{\infty}}{(1-sL)},
1+s(a_\infty+L),
1
\right\}\\
&= \min_{a>0} \max\left\{\frac{(1+\mu/a)}{(1-sL)},1+s(a+L)\right\}\\
&\le\frac{1}{ 1-sL }\max\left\{1+\mu/L,(1+2L s)(1-sL)\right\}=\frac{1+\max\{\mu/L, 1/8\}}{ 1-L s }.
\end{array}
$$
The proof is completed with $\cE_{k}\le \eta_k \cE_{k-1}$ where $\eta_k:=1-\mu s/E(t_k)$.
\end{proof}

Then we further establish the linear convergence of the iteration sequence generated by the iteration \eqref{NAG}.

\begin{proposition}
\label{thm: nagseq}
    Under \Cref{ass:smooth}, let $\{\vx_k\}$ and $\{\vy_k\}$ be the sequences generated by the iteration \eqref{NAG} with $s<1/L$. Then, there exists a positive sequence $\{\eta_k\}_{k=0}^{\infty}$ such that one has
\begin{equation}\label{xkx0}
\begin{array}{c}
     \|\vx_k-\vx^\star\|^2
 \leq
 \max\left\{ \frac{1}{t_k}
 , 1-\mu s (t_k-1) \right\}
 \prod_{i=0}^{k-1}\eta_{i}
 \left\|\vx_{0}-\vx^\star\right\|^{2}
\end{array}
     \end{equation}
     for all $k\geq 1$. Specifically, $\{\eta_i\}_{i=0}^\infty$ is a strictly increasing sequence and satisfies
\begin{equation}\label{def: eta}
\begin{array}{l}
     \eta_{0}= 1-\frac{2\mu L s}{\mu +L},~\eta _1 = 1-\mu s, \mbox{ and }\ \bar\eta:=\lim\limits_{i\to+\infty}\eta_{i}\leq1- \frac{(1-L s)\mu s}{1+\max\{\frac{\mu}{L},\frac{1}{8}\}}<\rho. 
\end{array}
\end{equation} 

 \end{proposition}

 \begin{proof}
     We prove \eqref{xkx0} by induction. According to \cite[Theorem 2.1.15]{Nesterov2004IntroductoryLO} we know that
\begin{equation}
\label{GD descent}
\begin{array}{c}
     \left\|\vx_{k+1}-\vx^\star\right\|^2\leq (1-\frac{2\mu L s}{\mu +L})\left\|\vy_k-\vx^\star\right\|^2 \quad\mbox{for all}\quad k\ge 0.
\end{array}
\end{equation}
Then the inequality \eqref{xkx0} holds when $k=1$ with $\eta_{0}= 1-\frac{2\mu L s}{\mu +L}$. Assume that \eqref{xkx0} holds for a certain $k\ge2$, then $\vx_k$ satisfies $\left\|\vx_k-\vx^\star\right\|^2 \leq
\prod_{i=0}^{k-1}\eta_{i}
\left\|\vx_{0}-\vx^\star\right\|^{2}\leq
\prod_{i=1}^{k}\eta_{i}
\left\|\vx_{0}-\vx^\star\right\|^{2}$. With the similar analysis as the inequality \eqref{apg ykxkx0} in \Cref{thm: apgseq}, the inequality \eqref{linear: nagle} implies that there exists a positive sequence $\{\eta_i\}_{i=1}^\infty$ satisfying \eqref{def: eta} such that
\begin{equation*}%\label{ykxkx0}
\begin{array}{c}
     \left\|\vy_{k}-\vx^\star\right\|^2
\leq \left(1-\frac{1}{t_{k+1}}
-\mu s (t_{k+1}-1)\right)
\left\|\vx_{k}-\vx^\star\right\|^2
+\frac{1}{t_{k+1}}
\prod_{i=1}^{k}\eta_{i}
{\left\|\vx_{0}-\vx^\star\right\|^{2}},
\end{array}
\end{equation*}
for all $k\ge1$. %where $\{\eta_i\}_{i=1}^\infty$ is 
Taking \eqref{GD descent} into account, the proof is completed by
$$
\begin{array}{ll}
     &\left\|\vx_{k+1}-\vx^\star\right\|^2\leq \eta_0\left\|\vy_k-\vx^\star\right\|^2\\
\leq &\max \left\{0,
1-\frac{1}{t_{k+1}}-\mu s(t_{k+1}-1)\right\}
\prod_{i=0}^{k}\eta_{i}
\left\|\vx_{0}-\vx^\star\right\|^{2}
+\frac{1}{t_{k+1}}
\prod_{i=0}^{k}\eta_{i}
\left\|\vx_{0}-\vx^\star\right\|^{2}\\
\leq &\max \left\{\frac{1}{t_{k+1}},
1-\mu s(t_{k+1}-1)\right\}
\prod_{i=0}^{k}\eta_{i}
\left\|\vx_{0}-\vx^\star\right\|^{2}.
\end{array}
$$
 \end{proof}

Furthermore, we have a sharper convergence result for \Cref{al: reapgc} without the nonsmooth term $g$.

\begin{theorem}\label{thm: renagc}
Under \Cref{ass:smooth}, let $\{\vx_k\}$ and $\{\vy_k\}$ be the sequences generated by \Cref{al: reapgc}. Then there exists a positive sequence $\{\eta_{i}\}_{i=0}^{\infty}$ satisfying \eqref{def: eta} such that
\begin{equation}\label{linear-nag-restart}
\begin{array}{c}
     \|\vx_k-\vx^\star\|^2
    \leq
   \prod_{i=0}^{k-1}\eta_{i}\cdot \|\vx_0-\vx^\star\|^2 \quad \mbox{for all}\quad k\ge 1.
\end{array}
\end{equation}
\end{theorem}

\begin{proof}
Let $j=k-S_{m_k}$ with $\{S_i\}$ defined in \eqref{def: Ki} and $m_k$ defined in \eqref{def: m_k}. According to the increasing property of $\eta_k$ and the inequality \eqref{xkx0}, one has for all $k\ge1$ that
$$
\begin{array}{l}
        \left\|\vx_{k}-\vx^\star\right\|^{2}
     \le
     \prod_{i=0}^{j-1}\eta_{i}
\|\vx_{S_{m_k}}-\vx^\star\|^2
\le
\prod_{i=0}^{j-1}\eta_{i}\cdot
\big(\prod_{l=1}^{m_k}
 \prod_{i=0}^{K_{l}-1}\big)\eta_{i}
\|\vx_0-\vx^\star\|^2  
\le
 \prod_{i=0}^{k-1}\eta_{i}
    \cdot \|\vx_0-\vx^\star\|^2.
\end{array}
$$
\end{proof}

\begin{remark}
    Note that $\eta_{0}=1-\frac{2\mu L s}{\mu +L}, \eta_{1}=1-\mu s$ and $\eta_{i}$ strictly increases to $ \bar\eta<\rho$, one has $\prod_{i=0}^{k-1}\eta_{i}<\prod_{i=1}^{k}\eta_{i}<\left( 1-\mu s\right)\bar\eta^{k-1}<\rho^k
    $. Consequently, then all the convergence rates obtained in \Cref{thmnagcle}, \Cref{thm: nagseq}, and \Cref{thm: renagc} are sharper than those in the nonsmooth scenario discussed in \Cref{sec: discrete}.
\end{remark}

Finally, an improvement for restarting is established as follows, given that the restart intervals are uniformly bounded.

% When further assuming the restart intervals are uniformly bounded, APG with gradient restarting and $g=0$ achieves a sharper convergence result as shown in the following corollary.
\begin{corollary}
\label{coro: bouned}
Under \Cref{ass:smooth}, let $\{\vx_k\}$ and $\{\vy_k\}$ be the sequences generated by \Cref{al: reapgc}. Assume the restart intervals $\{K_i\}$ are uniformly bounded from above by some integer $\overline{\cal K}>2$. Then there exists a positive sequence $\{\hat\eta_{i}\}_{i=0}^{\infty}$ such that
$$
\begin{array}{c}
     \|\vx_k-\vx^\star\|^2
    \leq
   \prod_{i=0}^{k-1}\hat\eta_{i}\cdot \|\vx_0-\vx^\star\|^2 \quad \mbox{for all }k\ge 1,
  \end{array}
$$
where $\hat \eta_i\leq \eta_i,i\ge0$ and $\hat \eta:=\sup_i \hat \eta_i<\min\{\hat \rho, \bar\eta\}$.
\end{corollary}

\begin{proof} 
Since we assume the restart interval $K_l\leq \overline{\cal K}$, then we have $\prod_{i=0}^{K_l-1}\eta_{i}<\eta_{\overline{\cal K}}^{K_l},\forall~l\ge 1$ and $\prod_{i=0}^{j-1}\eta_{i}<\eta_{\overline{\cal K}}^{j}$.
Given $k\ge1$, with the same definition of $\vartheta_{\overline{\cal K}}$ and $\{\bar \theta_l\}$ in \eqref{def: vartheta}, we have
$$
\begin{array}{ll}
\|\vx_k-\vx^\star\|^2
&\leq
\left(\bar \theta_{j}\prod_{i=1}^{m_k}\bar \theta_{K_i}
\right)\cdot
\big(
\prod_{i=0}^{j-1}\eta_{i}\cdot
\prod_{l=1}^{m_k}
 \prod_{i=0}^{K_{l}-1}\eta_{i}
\big)
\|\vx_0-\vx^\star\|^2
\\[1mm]
&\leq 
\big(\vartheta_{\overline{\cal K}}\big)^{k-1}
\big[\eta_{\overline{\cal K}}^{j+\sum_{i=2}^{m_k} K_{i}}\prod_{i=0}^{K_{1}-1}\eta_{i}\big]
\|\vx_0-\vx^\star\|^2
\\[1mm]
&= 
\eta_{0}\left(\vartheta_{\overline{\cal K}}\right)^{k-1}
\big(\prod_{i=1}^{k-1}
\tilde{\eta}_{i,\overline{\cal K}}
\big)
%\left(\eta_{\overline{\cal K}}\right)^{k-m_k-1}
\|\vx_0-\vx^\star\|^2,
\leq 
\eta_{0} \prod_{i=1}^{k-1}\hat \eta_{i}
\|\vx_0-\vx^\star\|^2,
\end{array}
$$
where $\tilde{\eta}_{i,\overline{\cal K}}:=\min\{\eta_i,\eta_{\overline{\cal K}}\}$ and $\hat \eta_i:=\vartheta_{\overline{\cal K}}\cdot\tilde{\eta}_{i,\overline{\cal K}}$ for $i\ge1$. Note that $1-\frac{2\mu L s}{\mu +L}<\tilde{\eta}_{i,\overline{\cal K}}\le\eta_i<\rho$ and $\sup_i\tilde{\eta}_{i,\overline{\cal K}}=\eta_{\overline{\cal K}}<\lim_{i\to\infty}\eta_i=\bar\eta$, then the proof is completed with $\hat \eta_0=\eta_0$.
\end{proof}

\begin{remark}
    Although this section primarily examines the linear convergence of one sequence, all the iterative sequences $\{\vx_k\},\{\vy_k\}$ and the function value sequence $\{f(\vx_k)\}$ converge at a similar rate.
\end{remark}

\end{appendix}

\end{document}